\numberwithin{equation}{section}
\newcommand{\R}{\mathbb R}
\newcommand{\N}{\mathbb N}
\newcommand{\be}{\begin{equation}}
\newcommand{\ee}{\end{equation}}
\newcommand{\ba}{\begin{eqnarray}}
\newcommand{\ea}{\end{eqnarray}}
\def\rank{\hbox{\rm rank$\,$}}
\def\ds{\displaystyle}
\def\h{\widehat}
\def\wt{\widetilde}
\newtheorem{theorem}{Theorem}[section]
\newtheorem{proposition}[theorem]{Proposition}
\newtheorem{remark}[theorem]{Remark}
\newtheorem{lemma}[theorem]{Lemma}
\newtheorem{corollary}[theorem]{Corollary}
\newtheorem{definition}[theorem]{Definition}
\newtheorem{assumption}[theorem]{Assumption}
\begin{document}


\title{Controllability of evolution equations with memory}

\author[F. W. Chaves Silva]{Felipe Wallison Chaves-Silva}
\address{Departament of Mathematics, Federal University of Pernambuco, CEP: 50740-540, Recife, PE, Brazil}
\email{fchaves@dmat.ufpe.br}

\author[X. Zhang]{Xu Zhang}
\address{School of Mathematics, Sichuan University, Chengdu 610064, Sichuan Province, China.}
\email{zhang$\_$xu@scu.edu.cn}

\author[E. Zuazua]{Enrique  Zuazua}
\address{[1] DeustoTech, Fundaci\'on Deusto, Avda Universidades, 24, 48007, Bilbao - Basque Country - Spain\\
	[2] Departamento de Matem\'aticas, Universidad Aut\'onoma de Madrid, Cantoblanco, 28049 Madrid - Spain\\
	[3] Facultad de Ingenier\'{\i}a, Universidad de Deusto, Avda. Universidades, 24, 48007, Bilbao - Basque Country - Spain.}
\email{enrique.zuazua@uam.es}

\begin{abstract}
This article is devoted to studying the null controllability of evolution equations with memory terms. The problem is challenging not only because the state equation contains memory terms but also because the classical controllability requirement at the final time has to be reinforced, involving the contribution of the memory term,  to ensure that the solution reaches the equilibrium.   Using duality arguments, the problem is reduced to the obtention of suitable observability estimates for the adjoint system. We first consider finite-dimensional dynamical systems involving memory terms and derive rank conditions for controllability. Then the null controllability property is established for  some parabolic  equations with memory terms, by means of Carleman estimates.
\end{abstract}

\keywords{Evolution equation with memory; Memory-type null controllability; Rank condition; Carleman estimates; Observability estimate}

\subjclass[2010]{45K05, 93B05, 93B07, 93C05}

\maketitle

\section{Introduction}

The problem of controllability for evolution equations is a classical one. Starting from finite dimensional linear systems
(see \cite{15}), where controllability can be characterized by algebraic rank equations on the matrices generating the dynamics and taking account of the control action,  the theory has been adapted and extended  to more general systems including
infinite dimensional systems, and its nonlinear and stochastic counterparts (see e.g. \cite{AI, Coron, FI, Lions, Russell, Z2, Zua2} and the rich references therein).

However, most of the existing works are concerned with  evolution equations  involving memory terms
that are relevant from a physical point of view.
For instance, in \cite{GP} a modified Fourier's law was  introduced
 to correct the unphysical property
of instantaneous propagation for the heat equation   (e.g. \cite{Ca}), which results in a heat equation with
memory:
 \begin{equation}\label{1.2}
 \left\{
 \begin{array}{ll}\ds
 y_t-\sum_{i,j=1}^n  \left\{a^{ij}(x)\left[ay_{x_i}+\int_0^t b(t-s,x)y_{x_i}(s,x)ds\right]\right\}_{x_j}=u\chi_\omega(x) & \mbox{ in }
 Q,\\[2mm]
 \ds
 y= 0& \mbox{ on }\Sigma,\\[2mm]
\ds y(0)=y_0& \mbox{ in } \Omega.
 \end{array}
 \right.
 \end{equation}
Here $b(\cdot,\cdot)$ is a smooth memory kernel, $a\in \{0,1\}$ is a parameter, $\Omega \ (\subset \mathbb{R}^n, \ n\in\mathbb{N})$ is a bounded domain with a   $C^\infty$-smooth boundary $\partial \Omega$,  $T > 0$ is a given finite time horizon and  $\omega$ is a non-empty open subset of $\Omega$ where the control is applied. We also use the notation $Q = (0,T)\times\Omega $ and  $\Sigma = (0,T)\times \partial \Omega$, and denote by $\chi_\omega$ the characteristic function of $\omega$ and by $\nu=\nu(x)$ the unit outward
normal vector of $\Omega$ at $x\in \partial \Omega$. $x=(x_1,\cdots,x_n)^\top$ and $(a^{ij}(x)\big)_{n\times n}$ is a
given uniformly positive definite matrix with suitable smoothness.

The well-posedness and the propagation speed of these models
was analysed in \cite{YZ1}.

In the absence of memory term (i.e., $b(\cdot) \equiv 0$) and when $a=1$, this system becomes the following classical heat equation
  \begin{equation}\label{1.1}
 \left\{
 \begin{array}{ll}
 \displaystyle y_t-\sum_{i,j=1}^n \Big[a^{ij}(x)y_{x_i}\Big]_{x_j}=u\chi_\omega(x) & \hbox{ in }  Q, \\[2mm]
  y=0              &      \hbox{ on } \Sigma, \\[2mm]
  y(0)=y_0          & \hbox{ in }  \Omega,
 \end{array}
 \right.
 \end{equation}
 and  its null controllability properties  are by now well known.
For instance, it is
well-known (e.g. \cite{FI}) that for any given $T>0$ and
non-empty open subset $\omega$ of $\Omega$, the equation (\ref{1.1}) is
null controllable in $L^2(\Omega)$,
i.e., for any given $y_0\in L^2(\Omega)$, one can find a control $u\in
L^2((0,T)\times\omega)$ such that
the weak solution $y(\cdot)\in C([0,T];L^2(\Omega))\cap
C((0,T];H_0^1(\Omega))$ to (\ref{1.1}) satisfies
\begin{equation}\label{null}
y(T)=0.
\end{equation}

In this parabolic setting it is notable that, thanks to the  infinite
speed of propagation, the controllability time $T$ and
the control region $\omega$ can be chosen as small as one likes.

But this property of null controllability of the parabolic model is far from being true and well understood  when the model incorporates memory terms (see, for instance, \cite{G-I, Ha-Pan}).

When $a=0$ and under certain conditions,
 in  \cite{YZ1} it was shown that the system (\ref{1.2}) enjoys a finite speed of
propagation property for finite heat pulses, what makes it more realistic  for heat conduction. This has also important consequences from a control theoretical point of view.
For instance, when $a=0$,  under suitable
conditions on $a^{ij}(\cdot)$, geometric conditions on    $\omega$ and provided $T>0$ is large enough,  the system enjoys the control property that,
given $y_0, \,y_1\in L^2(\Omega)$, there is a control $u\in
L^2((0,T)\times\omega)$ such that the corresponding solution $y\in
C([0,T];\,L^2(\Omega))$ satisfies $y(T)=y_1$ in $\Omega$ (\cite{FYZ}).
We refer to \cite{Castro, LLTT, LiuY, Z} for some related works in this respect.

Note however that this result does not guarantee anything about the value of the accumulated memory that the system reaches at time $t=T$.  Accordingly, it  does not guarantee that the system may be driven to rest since this fact, in addition to the condition $y(T ) \equiv 0$, would require also that the memory term reaches the null value:
$$\int_0^T b(T-s,x)y_{x_i}(s,x)ds\equiv 0.
$$
In this sense, this  result has to be viewed as a property of partial controllability, but not of full controllability, since the later would require to control of the memory term too.

When $a=1$, (\ref{1.2}) is a controlled heat equation with a parabolic memory kernel. In this case, as it has been shown  in recent years (See \cite{G-I, Ha-Pan, Pandolfi, ZG}),  the null controllability may fail whenever the memory kernel $b(\cdot,\cdot)$ is a non-trivial constant and the control region $\omega$ is fixed, independent of time.  Nevertheless, the approximate controllability property is still possible for the same equation, at least for some special cases (See \cite{Barbu, ZG}).  The full picture  is still unclear and this papers aims to contribute in this direction.

The main results in this paper consist on, first, formulating the proper notion of controllability for these memory systems and then proving that, even if this property fails to be true for control supports that are independent of time, they hold provided the support of the control moves, covering the whole domain where the equation evolves, in the spirit of previous results in \cite{CRZ} for the system of viscoelasticity.

In order to illustrate this link between viscoelasticity and the memory models under consideration let us first analyze the simplest case:
\begin{equation}\label{heat0}
\left\{
\begin{array}{ll}
\displaystyle y_t-\Delta y + \int_0^t y(s)ds   = u\chi_{\omega}(x) &  \mbox{in}  \    Q,  \\[2mm]
y = 0 & \mbox{on} \  \Sigma, \\[2mm]
y(0) = y_0 & \mbox{in} \ \Omega.
\end{array}
\right.
\end{equation}
Setting $z(t)= \int_0^t y(s)ds$, this system can be rewritten as
\begin{equation}\label{heat00}
\left\{
\begin{array}{ll}
\displaystyle y_t-\Delta y + z   = u\chi_{\omega}(x) &  \mbox{in}  \    Q,  \\[2mm]
z_t=y &  \mbox{in}  \    Q,  \\[2mm]
y = z = 0 & \mbox{on} \  \Sigma, \\[2mm]
y(0) = y_0, \, z(0)=0& \mbox{in} \ \Omega.
\end{array}
\right.
\end{equation}
This system is constituted by the coupling of a heat equation with an ordinary differential equation (ODE), as in the context of viscoelasticity (See \cite{CRZ}). Of course the full null control of the system requires driving both the state $y$ and the memory term $z= \int_0^t y(s)ds$ to the null state at time $t=T$. But the presence of the ODE component makes the controllability of the system to be impossible if the control is confined to a strict subset $\omega$ of $\Omega$. This is why the support of the control needs to move to cover the domain where the equation evolves in the control time horizon.

As we shall see, the main ideas and techniques developed in \cite{CRZ} can be adapted to this setting.

To present our main results we consider the following abstract setting:
\begin{equation}\label{0.2}
 \left\{
 \begin{array}{ll}
 \ds y_t=A y+\int_0^tM(t-s)y(s)ds+B (t)u,\quad t\in (0,T],\\[2mm]
 \ds y(0)=y_0.
 \end{array}
 \right.
 \end{equation}
Here, $y=y(t)$ is the state variable which takes
values in a Hilbert space $Y$, $A$ generates a
$C_0-$semigroup $e^{A t}$ on $Y$, $M(\cdot)\in L^1(0,T;\mathcal L (Y))$, $u$ denotes the control variable taking values in another Hilbert
space $U$, and $B(\cdot)\in L^2(0,T;\mathcal L (U;Y))$.

It is easy to check that (e.g., \cite{YZ1}), under some mild assumptions on the coefficients $a^{ij}(\cdot)$ and $b(\cdot,\cdot)$, (\ref{1.2}) is a special case of \eqref{0.2}.

As in the context of  systems without memory terms,   \eqref{0.2} could be said to be null controllable if for any $y_0\in Y$, there exists a control $u(\cdot)\in L^2(0,T;U)$ such that the corresponding solution $y(\cdot)$ satisfies
$ y(T)=0$. Nevertheless,  because of the inertia effect of the memory term $\int_0^tM(t-s)y(s)ds$, the null state of \eqref{0.2} at $T$ cannot be kept for $t \ge T$ in the absence of control $u(t)=0$ for a.e. $t>T$. To guarantee this,  one needs to impose the  extra requirement that
  \begin{equation}\label{0zx.13}
 \int_0^TM(T-s)y(s)ds=0.
 \end{equation}

Going a bit further, we introduce the following concept of memory-type null controllability.

 \begin{definition}\label{1d2}
Given a memory kernel $\wt M(\cdot)\in L^1(0,T;\mathcal L (Y))$,  not necessarily  the same as $M(\cdot)$ in \eqref{0.2},
the equation \eqref{0.2} is called memory-type null controllable (with the memory kernel $\wt M(\cdot)$) if for any $y_0\in Y$, there is a control $u(\cdot)\in L^2(0,T;U)$ such that the corresponding solution $y(\cdot)$ satisfies
 \begin{equation}\label{0e1s}
 y(T)=0\ \ \ \hbox{ and }\ \ \ \int_0^T\wt M(T-s)y(s)ds=0.
 \end{equation}
 \end{definition}

The classical notion of (partial) null controllability of (\ref{0.2}) in the sense of (\ref{null}) is a special case of  memory-type null controllability of (\ref{0.2}) (by taking the memory kernel $\wt M(\cdot)\equiv 0$). But the full control of the system, as mentioned above, requires to take $\wt M \equiv M$.

The main goal of this article is to study the memory-type null controllability of (\ref{0.2}). By means of  classical duality arguments,  the problem will be reduced to the  obtention of suitable observability estimates for its adjoint system (see Proposition \ref{prop-2}). However, the required  observability estimates have not been addressed so far and this is the objective of the present paper,  focusing on finite-dimensional ordinal differential equations and  parabolic equations. As we shall see, in order to achieve the memory-type null controllability of (\ref{0.2}), except for some trivial cases, it is necessary to use controls with moving support. This is why we choose the control operator $B(\cdot)$ to be time-dependent (see Remark \ref{rek-2} for further explanations).

\begin{remark}\label{rek-1.2}
As in the classical setting of evolution equations without memory terms, one may introduce the (apparently stronger) condition of memory-type trajectory controllability in the sense that both the state $y$ and the memory at time $T$ match the values of a given trajectory of \eqref{0.2} and its corresponding memory by means of a suitable control $u(\cdot)$. Due to the linearity of the system under consideration, the memory-type trajectory controllability follows from its memory-type null controllability. In the sequel, accordingly, we shall focus on the problem of memory-type null controllability.
\end{remark}

The rest of this paper is organized as follows. In Section \ref{GC} we consider abstract evolution equations with memory terms and prove that the null and memory-type null controllability properties are equivalent to certain observability inequalities for appropriate adjoint systems. In Section \ref{FDC} we consider ordinary differential equations with memory and prove several rank conditions ensuring memory-type null controllability.  In Section \ref{ParabolicCase} we prove the memory-type null controllability for  parabolic equations with memory under suitable conditions on the moving control. Finally, in Section \ref{s6-1}, we list some open problems related to the topic in this paper.

\section{Abstract duality and the observability analog}\label{GC}

In this section, we consider the problem of memory-type null controllability (with the memory kernel $\wt M(\cdot)$) of (\ref{0.2}). For this, we introduce its adjoint system\footnote{Throughout this
paper, for any operator-valued function $R$, we denote by $R^*$ its
pointwise dual operator-valued function. For example, if $R\in
L^1(0,T; {\mathcal L}(Y))$, then
$R^*\in L^1(0,T; {\mathcal L}(Y))$,
and $|R|_{L^1(0,T; {\mathcal L}(Y))}=|R^*|_{L^1(0,T; {\mathcal L}(Y))}$.}:
\begin{equation}\label{adj1-z0}
\left\{
\begin{array}{ll}
\ds w_t=-A^* w -\int_t^TM(s-t)^* w(s)ds+\wt M(T-t)^*z_T, \quad t\in [0,T),  \\[2mm]
w(T) = w_T,
\end{array}
\right.
\end{equation}
where $w_T,z_T\in Y$.

We have the following result.

\begin{proposition}\label{prop-2}
Equation \eqref{0.2} is memory-type null controllable (with the memory kernel $\wt M(\cdot)$) if and only if there is a constant $C>0$ such that solutions of \eqref{adj1-z0} satisfy
 \begin{equation}\label{2e2.3}
 |w(0)|_Y^2\leq C\int_0^T|B(s)^*w(s)|_U^2ds,\quad\quad \forall\; w_T,z_T\in Y.
 \end{equation}
\end{proposition}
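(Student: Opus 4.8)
The plan is to use the standard duality between controllability and observability, carefully tracking the memory contributions. First I would fix $y_0\in Y$ and think of the state equation \eqref{0.2} as a map sending a control $u$ to the pair $\bigl(y(T),\int_0^T\wt M(T-s)y(s)\,ds\bigr)\in Y\times Y$. By the variation-of-constants formula for equations with memory, $y(t)$ depends affinely on $u$, so this map is affine; memory-type null controllability (Definition \ref{1d2}) asserts exactly that the image of this affine map is all of $\{(0,0)\}$, i.e. that $(0,0)$ lies in the range of the associated affine map for every $y_0$. The key algebraic identity is a duality pairing: one computes $\langle w(T),y(T)\rangle_Y+\langle z_T,\int_0^T\wt M(T-s)y(s)\,ds\rangle_Y$ by differentiating $t\mapsto\langle w(t),y(t)\rangle_Y$ and using both equations \eqref{0.2} and \eqref{adj1-z0}. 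The term $-\int_t^T M(s-t)^*w(s)\,ds$ in the adjoint is designed precisely to cancel the memory term $\int_0^tM(t-s)y(s)\,ds$ in \eqref{0.2} after a Fubini interchange of the double time integral, and the forcing term $\wt M(T-t)^*z_T$ in the adjoint is designed to produce the pairing against $\int_0^T\wt M(T-s)y(s)\,ds$. Carrying this out yields the fundamental relation
\[
\langle w(T),y(T)\rangle_Y+\Bigl\langle z_T,\int_0^T\wt M(T-s)y(s)\,ds\Bigr\rangle_Y-\langle w(0),y_0\rangle_Y=\int_0^T\langle B(s)^*w(s),u(s)\rangle_U\,ds,
\]
valid for all data $w_T,z_T\in Y$ and all controls $u$.

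From this identity the proposition follows by the classical functional-analytic argument. For the \emph{if} direction, assume the observability inequality \eqref{2e2.3}. Since \eqref{0.2} is linear it suffices to treat $y_0$ arbitrary and seek $u$ with $y(T)=0$ and $\int_0^T\wt M(T-s)y(s)\,ds=0$; equivalently, one minimizes the quadratic functional
\[
J(w_T,z_T)=\tfrac12\int_0^T|B(s)^*w(s)|_U^2\,ds-\langle w(0),y_0\rangle_Y
\]
over $(w_T,z_T)\in Y\times Y$, where $w$ solves \eqref{adj1-z0} with data $(w_T,z_T)$. The observability inequality gives coercivity of $J$ (after checking that $(w_T,z_T)\mapsto w(0)$ is bounded, which follows from the $C_0$-semigroup property of $A$ together with $M\in L^1(0,T;\mathcal L(Y))$ and a Gronwall argument on \eqref{adj1-z0}), so $J$ attains a minimum at some $(\bar w_T,\bar z_T)$; writing the Euler--Lagrange equation and comparing with the fundamental identity shows that $u(s)=B(s)^*\bar w(s)$ drives the system so that both conditions in \eqref{0e1s} hold. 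Conversely, for the \emph{only if} direction, memory-type null controllability means that for every $y_0$ there is $u$ realizing \eqref{0e1s}; plugging this into the fundamental identity makes the first two terms on the left vanish, leaving $-\langle w(0),y_0\rangle_Y=\int_0^T\langle B(s)^*w(s),u(s)\rangle_U\,ds$ for all adjoint data. A uniform boundedness / closed-graph argument on the control-to-state map (standard in this context: the set of attainable $y_0$ with control cost $\le 1$ is a neighborhood of the origin) then upgrades this to the quantitative estimate \eqref{2e2.3}.

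The main obstacle I anticipate is not the abstract duality skeleton, which is routine, but making the integration-by-parts / Fubini computation fully rigorous in the low-regularity setting: $A$ merely generates a $C_0$-semigroup, so $y$ and $w$ need not be classical solutions, and the pairing $t\mapsto\langle w(t),y(t)\rangle_Y$ must be handled via mild (variation-of-constants) formulas rather than by naive differentiation. One should therefore either work first with dense sets of smooth data and pass to the limit, or substitute the Duhamel representations of $y$ and $w$ directly and verify the cancellation of the memory double integrals by an explicit Fubini interchange — legitimate because $M\in L^1$ and the semigroup is strongly continuous and bounded on $[0,T]$. A secondary technical point is the well-posedness of the backward memory system \eqref{adj1-z0} and the continuity of the map $(w_T,z_T)\mapsto(w(0),B(\cdot)^*w(\cdot))$ into $Y\times L^2(0,T;U)$, which again is a Volterra-type fixed point / Gronwall estimate; once these are in place, the proof is essentially the classical Hilbert Uniqueness Method adapted to the augmented target space $Y\times Y$.
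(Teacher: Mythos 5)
Your duality identity and your treatment of the ``only if'' direction are essentially the paper's: the same integration by parts yields \eqref{2z23}, and the quantitative estimate is recovered by a uniform boundedness argument (the paper normalizes a hypothetical sequence violating \eqref{2e2.3} and derives a contradiction via weak convergence of $w^k(0)$; your Baire-category variant on the sets of initial data controllable with cost at most $n$ is an equivalent standard route).

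There is, however, a genuine gap in your ``if'' direction. You assert that the observability inequality \eqref{2e2.3} gives coercivity of $J(w_T,z_T)=\frac12\int_0^T|B(s)^*w(s)|_U^2\,ds-\langle w(0),y_0\rangle_Y$ on $Y\times Y$. It does not: \eqref{2e2.3} bounds only $|w(0)|_Y$, not $|w_T|_Y$ or $|z_T|_Y$, so it yields merely that $J$ is bounded below (via $|\langle w(0),y_0\rangle_Y|\le C^{1/2}|y_0|_Y\,\|B^*w\|_{L^2(0,T;U)}$), and a minimizer over $Y\times Y$ need not exist. To salvage the variational route one must pass to the completion of $Y\times Y$ under the seminorm $\left[\int_0^T|B(t)^*w(t)|_U^2\,dt\right]^{1/2}$, which in turn requires knowing that this seminorm is a norm, i.e.\ the unique continuation property $B^*w\equiv 0\Rightarrow (w_T,z_T)=(0,0)$ --- a property that does \emph{not} follow from \eqref{2e2.3} and that the paper explicitly identifies (Remark \ref{rek-2b}) as an issue ``needing further clarification'' in this memory setting. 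The paper's proof avoids minimization altogether: it defines the linear functional $\mathfrak{F}(B^*w)=-(w(0),y_0)_Y$ on the subspace $\mathcal{L}=\{B(\cdot)^*w(\cdot)\}\subset L^2(0,T;U)$, observes that \eqref{2e2.3} makes $\mathfrak{F}$ well defined (if $B^*w_1=B^*w_2$ then $w_1(0)=w_2(0)$ by applying \eqref{2e2.3} to the difference) and bounded, extends it by Hahn--Banach, and uses the Riesz representation to produce $u$ directly; the duality identity then forces both conditions in \eqref{0e1s}. You should replace the minimization step by this Hahn--Banach/Riesz argument, or else supply the missing unique continuation statement and the completion construction.
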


\begin{proof} The proof is standard. For the readers' convenience, we give the details below.

We prove first the ``if" part. Fix a $y_0\in Y$. We introduce a linear subspace $\mathcal{L}$ of $L^2(0,T;U)$ as follows:
 $$
  \mathcal{L}=\{B(\cdot)^*w(\cdot)\;|\; w(\cdot)\hbox{ solves }\eqref{adj1-z0}\hbox{ for some } w_T,z_T\in Y\}.
  $$
 For any $B(\cdot)^*w(\cdot) \in \mathcal{L}$, we define
  $$
  \mathfrak{F}\big(B(\cdot)^*w(\cdot)\big)=-(w(0),y_0)_Y.
  $$
 By \eqref{2e2.3}, we see that $\mathfrak{F}$ is a bounded linear functional on the normed vector space $\mathcal{L}$ (with the norm inherited from $L^2(0,T;U)$). Hence, by the Hahn-Banach Theorem, $\mathfrak{F}$ can be extended to a bounded linear functional on $L^2(0,T;U)$). Now, Riesz Representation
Theorem allows us to find a function $\eta(\cdot)\in L^2(0,T;U)$ such that
 \begin{equation}\label{2e2z3}
 \int_0^T(B(t)^*w(t),\eta(t))_Udt=-(w(0),y_0)_Y.
  \end{equation}

We claim that
 \begin{equation}\label{2z3}
 u(\cdot)=\eta(\cdot)
 \end{equation}
is the desired control. Indeed, for any $w_T,z_T\in Y$, by \eqref{0.2} and \eqref{adj1-z0}, we
obtain that
 \begin{equation}\label{2z23}
 \begin{array}{ll}
 \displaystyle (w_T,y(T))_Y-(w(0),y_0)_Y=\int_0^T\frac{d}{dt}(w,y)_Y=\int_0^T[(w_t,y)_Y+(w,y_t)_Y]dt\\[2mm]
 \displaystyle=\int_0^T\left[\left(-A^* w -\int_t^TM(s-t)^* w(s)ds+\wt M(T-t)^*z_T,y\right)_Y\right.\\[2mm]
 \displaystyle\qquad\qquad+\left.\left(w,A y+\int_0^tM(t-s)y(s)ds+B(t) u\right)_Y\right]dt
 \\[2mm]
 \displaystyle=\left(z_T,\int_0^T\wt M(T-t)y(t)dt\right)_Y+\int_0^T(B(t)^*w(t),u(t))_Udt.
 \end{array}
 \end{equation}
Combining \eqref{2e2z3}, \eqref{2z3} and \eqref{2z23}, we end up with
 $$
 (w_T,y(T))_Y-\left(z_T,\int_0^T\wt M(T-t)y(t)dt\right)_Y=0,\qquad\forall\; w_T,z_T\in Y.
  $$
 Hence $y(T)=\int_0^T\wt M(T-t)y(t)dt=0$, as desired.

 Next, we prove the ``only if" part. For any $w_T,z_T\in Y$, by the equation \eqref{adj1-z0}, we may define a bounded linear operator ${\mathcal F}: Y\times Y\to Y$ as follows:
  \begin{equation}\label{2xfxe2.3}
 {\mathcal F}(w_T,z_T)=w(0).
  \end{equation}
 We now use the contradiction argument to prove \eqref{2e2.3}. Assume that \eqref{2e2.3} was not true. Then, one could find two sequences $\{z_T^k\}_{k=1}^\infty,\{w_T^k\}_{k=1}^\infty\subset Y$ such that the corresponding solutions $w^k(\cdot)$ to \eqref{adj1-z0}  (with $(w_T,z_T)$ replaced by $(w_T^k,z_T^k)$) satisfy
  \begin{equation}\label{2xxe2.3}
 0\leq \int_0^T|B(s)^*w^k(s)|_U^2ds<\frac{1}{k^2}|w^k(0)|_Y^2,\quad\quad \forall\;k\in \mathbb{N}.
 \end{equation}
Write
 $$
 \tilde w_T^k=\sqrt{k}\frac{w_T^k}{|w^k(0)|_Y}, \quad \tilde z_T^k=\sqrt{k}\frac{z_T^k}{|w^k(0)|_Y},
 $$
and denote by $\tilde w^k(\cdot)$ the corresponding solution to \eqref{adj1-z0}  (with $(w_T,z_T)$ replaced by $(\tilde w_T^k,\tilde z_T^k)$). Then, it follows from \eqref{2xfxe2.3} and \eqref{2xxe2.3} that, for each $k\in \mathbb{N}$,
  \begin{equation}\label{2xxe2.12}
 \int_0^T|B(s)^*\tilde w^k(s)|_U^2ds<\frac{1}{k},\quad\quad |{\mathcal F}(\tilde w_T^k,\tilde z_T^k)|_Y=\sqrt{k}.
 \end{equation}

Since  \eqref{0.2} is assumed to be memory-type null controllable (with the memory kernel $\wt M(\cdot)$), for any $y_0\in Y$, one can find a control $u(\cdot)\in L^2(0,T;U)$ such that the corresponding solution $y(\cdot)$ satisfies \eqref{0e1s}. For any $w_T,z_T\in Y$, by \eqref{0.2} and \eqref{adj1-z0}, similar to the proof of \eqref{2z23} and noting \eqref{0e1s}, we have
 $$
-(w(0),y_0)_Y=\int_0^T(B(t)^*w(t),u(t))_Udt.
 $$
In particular, it holds that
  \begin{equation}\label{2zaa23}
-({\mathcal F}(\tilde w_T^k,\tilde z_T^k),y_0)_Y=\int_0^T(B(t)^*\tilde w^k(t),u(t))_Udt.
 \end{equation}

By \eqref{2zaa23} and the first inequality in \eqref{2xxe2.12}, it is easy to see that ${\mathcal F}(\tilde w_T^k,\tilde z_T^k)$ tends to $0$ weakly in $Y$. Hence, by the Principle of Uniform Boundedness, we see that the sequence $\{{\mathcal F}(\tilde w_T^k,\tilde z_T^k)\}_{k=1}^\infty$ is uniformly bounded in $Y$,
contradicting the second equality in \eqref{2xxe2.12}.
This completes the proof of Proposition \ref{prop-2}.
\end{proof}

\begin{remark}\label{rek-2}
 Proposition \ref{prop-2} characterizes the property of memory-type null controllability in terms of a non-standard unique continuation property and observability inequality (\ref{2e2.3}) which, as we shall see, it is very hard to achieve when the control operator $B$ is time-independent, except for the trivial case where $B$ (from $U$ to $Y$) is onto. In particular, the results in \cite{G-I}, for instance, by means of a spectral analysis of the problem, show that this inequality may not hold for the heat equation with memory terms, if the support of the control is independent of time.

 This is why, in practice, our sufficient conditions for memory-type controllability will require the control operator $B$ to depend on time. This is particularly natural when dealing with concrete PDEs, and when the control operator $B$ localises the action of the support in a subdomain of $\Omega$. Accordingly, as in the context of viscoelasticity (see \cite{CRZ}), considering moving controls is a natural way of getting rid of the lack of the strong observability inequality (\ref{2e2.3}) .
\end{remark}
\begin{remark}\label{rek-2b}
The observability inequality in (\ref{2e2.3}) is relevant not only because it provides a characterisation of the property of memory-type null controllability but also because it leads to a constructive algorithm for control as explained in \cite{Zua2} in the PDE setting.
Indeed, assuming that
the observability inequality in (\ref{2e2.3}) holds, let us consider the following quadratic functional defined on the solutions to the adjoint system (\ref{adj1-z0}):
\begin{equation}
J(w_T,z_T) = \frac{1}{2} \int_0^T|B(s)^*w(s)|_U^2ds +(w(0),y_0)_Y.
\end{equation}
In principle $J$ is defined for $(w_T, z_T) \in Y \times Y$, and it is a continuous and convex functional in  that space. Let us assume that $J$ achieves its minimum at some $(w_T^*,z_T^*)$. It is then easy to see that the control $u= B^* w^*$, $w^*$ being the solution of the adjoint system corresponding to the minimiser, is the control we are looking for, ensuring the control condition  (\ref{0e1s}).

It is however important to observe that the existence of the minimiser is not a trivial issue. Indeed, the observability inequality  (\ref{2e2.3}) is very weak since it only leads to an upper bound on the norm of $w(0)$ in $Y$ but not on $w_T$,  neither on $z_T$. Thus, in order to minimize $J$ we need to introduce the Hilbert space closure of $Y\times Y$ with respect to the Hilbertian norm defined by
\begin{equation}\label{norm}
\left[\int_0^T|B(t)^*w(t)|^2 dt\right]^{1/2}.
\end{equation}

Note that whether the above semi-norm actually defines a norm is not a trivial fact.

This issue is well understood in the context of PDE (see \cite{Zua2}). For the wave equation without memory terms, the corresponding adjoint system (\ref{adj1-z0}) with $z_T \equiv 0$ being time-reversible, the observation of the norm of $w(0)$ in $Y$ ensures also the observation of $w_T$. The observability inequality allows then to minimise the functional $J$ (that would be now independent of $z_T$) with respect to $w_T$ in $Y$.

In the case of the heat equation the issue is more subtle since an estimate on $w(0)$ in $Y$ does not imply an estimate of $w_T$ in $Y$. However, by the backward uniqueness property of parabolic equations, this allows to define the completion of $Y$ with respect to the norm (\ref{norm}) and to ensure the existence of the minimiser of $J$ with respect to $w_T$ in that space.

In the present context of memory-type null controllability, despite the characterisation of the controllability property in terms of the observability of the augmented adjoint system, the actual implementation of this variational method to build controls needs further clarification.

Note that this kind of characterisation is of use in different contexts, and in particular in order to build efficient numerical approximation procedures by means of gradient descent methods.

\end{remark}

\section{The finite dimensional case}\label{FDC}

In this section, we consider the following controlled ordinary differential equation with a memory term:
 \begin{equation}\label{0.3}
 \left\{
 \begin{array}{ll}
 \ds y_t=A y+\int_0^tM(t-s)y(s)ds+B u,\quad t\in (0,T],\\[2mm]
 y(0)=y_0.
 \end{array}
 \right.
 \end{equation}
Here, $y=y(t)$ is the state variable which takes
values in $\mathbb{R}^n$, $A\in \mathbb{R}^{n\times n}$, $M(\cdot)\in L^1(0,T;\mathbb{R}^{n\times n})$, $u$ denotes the control variable taking values in $\mathbb{R}^m$ ($m\in \mathbb{N}$), and $B\in \mathbb{R}^{n\times m}$. In the sequel, we denote by $K^\top$ the transpose of a matrix $K\in \mathbb{R}^{n\times m}$.

According to the previous section, fix a memory kernel $\wt M(\cdot)\in L^1(0,T;\mathbb{R}^{n\times n})$, we then need to consider the following adjoint system:
\begin{equation}\label{adj0zx-0}
\left\{
\begin{array}{ll}
\ds w_t=-A^\top  w -\int_t^T M(s-t)^\top w(s)ds+\wt M(T-t)^\top z_T, \quad t\in [0,T),  \\[2mm]
w(T) = w_T,
\end{array}
\right.
\end{equation}
where $ w_T, z_T\in \mathbb{R}^n$.

We have the following result.

\begin{theorem}\label{EC-FD}
(i) If $M(\cdot), \wt M(\cdot)\in L^1(0,T;\mathbb{R}^{n\times n})$, and for any solution $w$ to the equation \eqref{adj0zx-0},
\begin{equation} \label{UC-1}
B^\top w \equiv 0 \quad \hbox{in } [0,T] \Rightarrow w_T =\wt M(t)^\top z_T =0, \quad\hbox{ a.e. }t \in [0,T],
\end{equation}
then the equation \eqref{0.3} is memory-type null controllable;

(ii) If $M(\cdot)= G\wt M(\cdot)$ and $\wt M'(\cdot)= \wt G\wt M(\cdot)$ for some (constant matrices) $G, \wt G\in \mathbb{R}^{n\times n}$ and the equation \eqref{0.3} is memory-type null controllable, then for any solution to the equation \eqref{adj0zx-0}, it holds that
 \begin{equation} \label{UC-2}
B^\top w \equiv 0 \quad \hbox{in } [0,T] \Rightarrow w_T =\wt M(t)^\top z_T =0, \quad t \in [0,T].
\end{equation}
\end{theorem}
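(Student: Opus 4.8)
The plan is to derive both parts from the abstract characterization of memory-type null controllability given in Proposition~\ref{prop-2}, which in the present finite-dimensional setting states that \eqref{0.3} is memory-type null controllable (with kernel $\wt M$) if and only if there is a constant $C>0$ such that every solution $w$ of \eqref{adj0zx-0} satisfies $|w(0)|^2 \le C\int_0^T |B^\top w(s)|^2\,ds$. The key preliminary observation is that, since everything in sight is finite-dimensional, the observability inequality is \emph{equivalent} to the corresponding unique continuation statement: $B^\top w \equiv 0$ on $[0,T]$ implies $w(0)=0$. One direction of this equivalence is trivial; for the other, I would argue by compactness, exactly as in the "only if" part of the proof of Proposition~\ref{prop-2}, but now exploiting that the map $(w_T,z_T)\mapsto w(\cdot)$ is linear and continuous from $\R^n\times\R^n$ into $C([0,T];\R^n)$, and that the unit sphere of $\R^n\times\R^n$ is compact, so a normalized contradicting sequence has a convergent subsequence whose limit violates unique continuation. (One must be slightly careful: the failure of the inequality could a priori come from $z_T$ blowing up while $w(0)$ stays bounded; this is handled by noting that if $w(0)=0$ forces $w_T=0$ and $\wt M(\cdot)^\top z_T = 0$ — which is what \eqref{UC-1} gives — then $w\equiv 0$ by the backward uniqueness of the linear integro-differential equation \eqref{adj0zx-0}, so the relevant quotient space is finite-dimensional and the compactness argument closes.)

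\emph{Part (i).} Assume the implication \eqref{UC-1}. First I would show $B^\top w \equiv 0 \Rightarrow w \equiv 0$ on $[0,T]$: indeed \eqref{UC-1} gives $w_T = 0$ and $\wt M(T-t)^\top z_T = 0$ for a.e.\ $t$, so the adjoint equation \eqref{adj0zx-0} reduces to the homogeneous Volterra integro-differential equation $w_t = -A^\top w - \int_t^T M(s-t)^\top w(s)\,ds$ with terminal datum $w(T)=0$, whose only solution is $w\equiv 0$ (Gronwall). In particular $w(0)=0$. Thus the linear map $\mathcal F:(w_T,z_T)\mapsto B^\top w(\cdot)\in L^2(0,T;\R^m)$ has the property that $\mathcal F(w_T,z_T)=0$ forces $w\equiv 0$, hence the induced map on the (finite-dimensional) quotient of $\R^n\times\R^n$ by the kernel of $(w_T,z_T)\mapsto w(\cdot)$ is injective, and by equivalence of norms on finite-dimensional spaces there is $C>0$ with $|w(0)|^2 \le C\int_0^T|B^\top w(s)|^2\,ds$ for all solutions. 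Proposition~\ref{prop-2} then yields memory-type null controllability.

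\emph{Part (ii).} Now assume $M(\cdot)=G\wt M(\cdot)$ and $\wt M'(\cdot) = \wt G \wt M(\cdot)$, and assume \eqref{0.3} is memory-type null controllable. By Proposition~\ref{prop-2} the observability inequality holds, so in particular $B^\top w \equiv 0$ forces $w(0) = 0$. The task is to upgrade $w(0)=0$ to the full conclusion $w_T = 0$ and $\wt M(t)^\top z_T = 0$ for all $t$. The idea is to turn the nonlocal adjoint equation into a local (ODE) system using the structural hypotheses, and then run a backward-uniqueness / differentiation argument. Concretely, set $v(t) = \int_t^T \wt M(s-t)^\top w(s)\,ds$; using $M = G\wt M$ the memory term in \eqref{adj0zx-0} becomes $G^\top v(t)$, and differentiating $v$ and using $\wt M' = \wt G\,\wt M$ gives $v_t = -\wt M(0)^\top w(t) - \wt G^\top v(t)$ (taking $t\to T$ also gives $v(T)=0$). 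Hence $(w,v)$ solves a \emph{closed linear ODE system} on $[0,T]$ driven additively by the constant-in-the-unknowns term $\wt M(T-t)^\top z_T$; equivalently, augmenting the state with $z_T$ (which satisfies $\dot z_T = 0$), we get an autonomous-type linear system for $(w,v,z_T)$. Since $B^\top w \equiv 0$ on the whole interval and $w(0)=0$, I would now differentiate the relation $B^\top w(t) = 0$ repeatedly and use the ODE system to generate a sequence of algebraic (rank-type) constraints; combined with $w(0)=0$ and backward uniqueness for the ODE system this forces $w\equiv 0$ on $[0,T]$, hence $w_T = w(T) = 0$, and then plugging $w\equiv 0$ back into \eqref{adj0zx-0} leaves $0 = \wt M(T-t)^\top z_T$ for all $t$, i.e.\ $\wt M(t)^\top z_T = 0$ for all $t\in[0,T]$, which is \eqref{UC-2}.

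\emph{Main obstacle.} The delicate point is Part (ii): one must genuinely exploit $M = G\wt M$ and $\wt M' = \wt G\wt M$ to replace the Volterra memory by a finite-dimensional ODE, because without such a structural assumption $w(0)=0$ alone does not propagate to $w_T=0$ (the observability inequality controls only $w(0)$, not the terminal data, exactly as emphasized in Remark~\ref{rek-2b}). Verifying that the augmented system $(w,v,z_T)$ is indeed closed under the stated hypotheses, and that the iterated-differentiation argument really collapses everything to zero rather than merely constraining $z_T$ to a proper subspace, is where the real work lies; the finite-dimensionality (equivalence of norms, compactness, backward uniqueness via Gronwall) does the rest and is routine.
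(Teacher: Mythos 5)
Your Part (i) is correct and is essentially the paper's argument: the implication \eqref{UC-1} forces $w\equiv 0$ (hence $w(0)=0$) whenever $B^\top w\equiv 0$, and in finite dimensions the inclusion of kernels $\ker\big((w_T,z_T)\mapsto B^\top w(\cdot)\big)\subseteq\ker\big((w_T,z_T)\mapsto w(0)\big)$ upgrades automatically to the observability inequality, after which Proposition \ref{prop-2} applies.

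Part (ii), however, has two genuine problems. First, the reduction to a closed ODE system is not the one you wrote: since $M(s-t)^\top=\wt M(s-t)^\top G^\top$ and $\wt M'(s-t)^\top=\wt M(s-t)^\top\wt G^\top$, the matrices $G^\top$ and $\wt G^\top$ sit \emph{inside} the integral, to the right of $\wt M(s-t)^\top$, and cannot be pulled out; with $v(t)=\int_t^T\wt M(s-t)^\top w(s)\,ds$ neither does the memory term equal $G^\top v$ nor does $v_t=-\wt M(0)^\top w-\wt G^\top v$ hold, unless $G,\wt G$ commute with $\wt M(\cdot)$. (A correct closed system does exist, e.g.\ with $p(t)=-\int_t^T e^{\wt G^\top(s-t)}G^\top w(s)\,ds+e^{\wt G^\top(T-t)}z_T$ one gets $w_t=-A^\top w+\wt M(0)^\top p$, $p_t=G^\top w-\wt G^\top p$, but it must be set up this way.) Second, and more seriously, even granting a correct closed system, the step ``$w(0)=0$ plus $B^\top w\equiv 0$ plus backward uniqueness $\Rightarrow w\equiv 0$'' does not go through: uniqueness for a first-order system requires the \emph{full} state $(w(0),p(0))$ to vanish, and the observability inequality gives no control whatsoever on $p(0)$ (equivalently on $z_T$); moreover, differentiating $B^\top w\equiv 0$ at the endpoints only produces algebraic constraints of the form $B^\top(\cdots)(w_T,z_T)=0$, which force nothing without the rank hypotheses that are the subject of Theorem \ref{EC-2FD}, not of this theorem. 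The paper's proof supplies the missing idea: under $M=G\wt M$ and $\wt M'=\wt G\wt M$, the derivative $\varphi=w_t$ solves \emph{another} adjoint system of the same form \eqref{adj0zx-0}, with terminal datum $\varphi(T)=-A^\top w_T+\wt M(0)^\top z_T$ and with ``$z_T$'' replaced by $G^\top w_T-\wt G^\top z_T$; since the observability inequality furnished by Proposition \ref{prop-2} holds for \emph{all} such data, it applies to $\varphi$ and, because $B^\top\varphi=\partial_t(B^\top w)\equiv 0$, yields $w_t(0)=0$. Iterating gives $d^kw/dt^k(0)=0$ for every $k$, and analyticity of $w$ in $t$ then forces $w\equiv 0$, whence $w_T=0$ and, plugging back into the equation, $\wt M(t)^\top z_T\equiv 0$. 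This recursion through time derivatives is the heart of Part (ii) and is absent from your proposal.
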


\begin{proof}
To prove (i), by (\ref{UC-1}) and using the classical compactness-uniqueness argument, it follows that solutions to \eqref{adj0zx-0} satisfy
 \begin{equation}\label{zx-0-2}
|w_T|^2+\left(\int_0^T| \wt M(t)^\top z_T|dt\right)^2\le C\int_0^T|B^\top w(t)|^2dt,\quad\forall\;w_T, z_T\in \mathbb{R}^n.
\end{equation}
Applying the usual energy estimate to \eqref{adj0zx-0}, we have
  \begin{equation}\label{zx-0-3}
|w(0)|^2\le C\left[|w_T|^2+\left(\int_0^T| \wt M(t)^\top z_T|dt\right)^2\right],\quad\forall\;w_T, z_T\in \mathbb{R}^n.
\end{equation}
Combining (\ref{zx-0-2}) and (\ref{zx-0-3}), we arrive at
 \begin{equation}\label{zx-gg0-3}
 |w(0)|^2\le C\int_0^T|B^\top w(t)|^2dt,\quad\forall\;w_T, z_T\in \mathbb{R}^n.
 \end{equation}
Hence, Proposition \ref{prop-2} implies that \eqref{0.3} is memory-type null controllable.

\medskip

We now prove (ii). By Proposition \ref{prop-2} and the memory-type null controllability of \eqref{0.3}, we see that solutions to the equation \eqref{adj0zx-0} satisfy \eqref{zx-gg0-3}. Hence,  $w(0)=0$ as a consequence of $B^\top w \equiv 0$ in $[0,T]$. Write $\varphi=w_t$. By the first equation of (\ref{adj0zx-0}), noting $M(\cdot)= G\wt M(\cdot)$ and $\wt M'(\cdot)= \wt G\wt M(\cdot)$, we have
 $$
 \begin{array}{ll}
 \displaystyle\varphi_t&\displaystyle=-A^\top  \varphi +M(0)^\top w+\int_t^TM'(s-t)^\top w(s)ds-\wt M'(T-t)^\top z_T\\[2mm]
 &\displaystyle=-A^\top  \varphi +M(0)^\top w+M(s-t)^\top w(s)\Big|_{s=t}^{s=T}-\int_t^TM(s-t)^\top \varphi(s)ds-\wt M'(T-t)^\top z_T\\[2mm]
 &\displaystyle=-A^\top  \varphi-\int_t^T M(s-t)^\top \varphi(s)ds + \wt M(T-t)^\top (G^\top w_T-\wt G^\top z_T).
 \end{array}
 $$
Hence, $\varphi$ solves
\begin{equation}\label{adj0zx-0-2-2}
\left\{
\begin{array}{ll}
\ds \varphi_{t}=-A^\top  \varphi-\int_t^T M(s-t)^\top \varphi(s)ds + \wt M(T-t)^\top (G^\top w_T-\wt G^\top z_T), \quad t\in [0,T),  \\[2mm]
\varphi(T) = -A^\top  w_T + \wt M(0)^\top z_T.
\end{array}
\right.
\end{equation}
Noticing that \eqref{adj0zx-0-2-2} is of the form \eqref{adj0zx-0}, it follows from \eqref{zx-gg0-3} that
\begin{equation}\label{FDNUO}
|w_t(0)|^2= |\varphi(0)|^2\leq C\int_0^T|B^\top \varphi(s)|^2ds=C\int_0^T|B^\top w_t(s)|^2ds=0.
 \end{equation}
Hence, $w_t(0)=0$. Repeating this argument, we see that $\left.d^kw(t)/dt^k\right|_{t=0}=0$ for all $k=0, 1,2,\cdots$. Since $w(t)$ is analytic in time $t$, it follows that $w(\cdot)\equiv 0$ in $[0,T]$.  Hence $w_T =\wt M(t)^\top z_T =0$ for any $t\in [0,T]$.
 \end{proof}

As an immediate consequence of Theorem \ref{EC-FD}, we have the following result.

\begin{corollary}\label{EC-cFD}
 Assume that $M(\cdot) \equiv M\in \mathbb{R}^{n\times n}$, $\wt M(\cdot) \equiv\wt M\in \mathbb{R}^{n\times n}$, $M= G\wt M$ for some $G\in \mathbb{R}^{n\times n}$. Then, the equation \eqref{0.3} is memory-type null controllable (with the kernel $\wt M$) if and only if solutions to the equation \eqref{adj0zx-0} satisfy 
 $$
B^\top w \equiv 0 \quad \hbox{in } [0,T] \Rightarrow w_T =\wt M^\top z_T =0.
 $$
\end{corollary}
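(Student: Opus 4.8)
The plan is to deduce the corollary directly from Theorem \ref{EC-FD}, since the time-independence of the kernels makes both structural hypotheses of that theorem available without extra work. First I would record the elementary observation that, when $\wt M(\cdot) \equiv \wt M$, the quantity $\wt M(t)^\top z_T = \wt M^\top z_T$ does not depend on $t$; hence the conclusion ``$\wt M(t)^\top z_T = 0$ a.e.\ $t \in [0,T]$'' appearing in \eqref{UC-1} and the conclusion ``$\wt M(t)^\top z_T = 0$ for all $t \in [0,T]$'' appearing in \eqref{UC-2} are both equivalent to the single algebraic condition $\wt M^\top z_T = 0$. This is what reconciles the two unique continuation statements of Theorem \ref{EC-FD} with the one in the corollary.

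For the implication ``unique continuation $\Rightarrow$ memory-type null controllability'', I would invoke Theorem \ref{EC-FD}(i). Its hypotheses require $M(\cdot), \wt M(\cdot) \in L^1(0,T;\mathbb{R}^{n\times n})$, which is immediate for constant matrices, together with \eqref{UC-1}; by the previous paragraph the assumed implication $B^\top w \equiv 0 \Rightarrow w_T = \wt M^\top z_T = 0$ is exactly \eqref{UC-1} in this setting. Theorem \ref{EC-FD}(i) then gives that \eqref{0.3} is memory-type null controllable.

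For the converse, I would invoke Theorem \ref{EC-FD}(ii), whose hypotheses are $M(\cdot) = G\wt M(\cdot)$ and $\wt M'(\cdot) = \wt G\wt M(\cdot)$ for some constant matrices $G, \wt G$. The first holds with the $G$ given in the statement, since $M = G\wt M$ and both kernels are constant. The second holds with $\wt G = 0$: as $\wt M(\cdot)$ is constant, $\wt M'(\cdot) \equiv 0 = 0 \cdot \wt M(\cdot)$. Hence Theorem \ref{EC-FD}(ii) applies and yields \eqref{UC-2}, which by the first paragraph is precisely the implication $B^\top w \equiv 0 \Rightarrow w_T = \wt M^\top z_T = 0$.

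I do not expect any real obstacle here: the corollary is a straightforward specialization, and the only mild subtlety — that part (ii) of the theorem asks for a differential identity for $\wt M$ — disappears because a constant kernel satisfies it trivially with $\wt G = 0$. The only point deserving explicit mention is the equivalence, noted above, between the pointwise (or a.e.) vanishing of $\wt M(t)^\top z_T$ and the algebraic relation $\wt M^\top z_T = 0$.
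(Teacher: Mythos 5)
Your proposal is correct and follows exactly the route the paper intends: the corollary is stated there as an immediate consequence of Theorem \ref{EC-FD}, and your verification that constant kernels satisfy the hypotheses of both parts (with $\wt G=0$ in part (ii)) together with the observation that $\wt M(t)^\top z_T=\wt M^\top z_T$ is $t$-independent is precisely the specialization being invoked.
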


We now present some rank conditions for the memory-type null controllability of \eqref{0.3}

\begin{theorem}\label{EC-2FD}
(i) Assume that $M(\cdot), \wt M(\cdot)\in L^1(0,T;\mathbb{R}^{n\times n})\cap C^\infty([0,T);\mathbb{R}^{n\times n})$, and define $A_i$, $M_i(\cdot)$ and $\wt M_i(\cdot)$ ($i=1,2,\cdots$) inductively by
 \begin{equation}\label{m1az2}
   \left\{
   \begin{array}{ll}
 A_{i+1}=AA_i+M_i(0),\quad  M_{i+1}(\cdot)=M(\cdot)A_i+M_i'(\cdot),\quad \wt M_{i+1}(\cdot)=\wt M(\cdot)A_i+\wt M_i'(\cdot)\\[2mm]
 A_1=A,\qquad M_1(\cdot)=M(\cdot),\qquad \wt M_1(\cdot)=\wt M(\cdot).
 \end{array}
 \right.
 \end{equation}
 If$\;$\footnote{Note that $A_{i+1}$  in \eqref{m1az2} is time-independent even if $M_{i+1}(\cdot)$  depends on $t$. Because of this, \eqref{2zxeds.3} is an algebraic condition.}
  \begin{equation}\label{2zxeds.3}
 \rank \left(\begin{array}{ccccccc}B & A_1B & A_2B &\cdots &A_iB &A_{i+1}B &\cdots\\[2mm]
 0 & \wt M_1(0)B & \wt M_2(0)B &\cdots &\wt M_i(0)B &\wt M_{i+1}(0)B &\cdots
 \end{array}\right)=2n,
 \end{equation}
then the equation \eqref{0.3} is memory-type null controllable;

(ii) Assume that both $M(\cdot)$ and $\wt M(\cdot)$ are analytic in $[0,T]$, and define $A_i$ as that in
 \eqref{m1az2} ($i=1,2\cdots$). Assume that $\left.d^i\wt M(t)/dt^i\right|_{t=0}=\wt M(0)G_i$ for some $G_i\in \mathbb{R}^{n\times n}$, and define
  \begin{equation}\label{2zads.3}
  F_i=A_i+G_1A_{i-1}+\cdots+G_{i-1}A_1+G_i.
  \end{equation}
 If
  \begin{equation}\label{2z-xeds.3}
 \rank \left(\begin{array}{ccccccc}B & A_1B & A_2B &\cdots &A_iB &A_{i+1}B &\cdots\\[2mm]
 0 &  B &  F_1B &\cdots &F_{i-1} B & F_iB &\cdots
 \end{array}\right)=2n,
 \end{equation}
then the equation \eqref{0.3} is memory-type null controllable;

(iii) Assume that $M(\cdot) \equiv M\in \mathbb{R}^{n\times n}$ and $\wt M(\cdot) \equiv\wt M\in \mathbb{R}^{n\times n}$, and define $A_i$ ($i=1,2\cdots$) inductively by
\begin{equation}\label{m1z2}
   \left\{
   \begin{array}{ll}
 A_{i+1}=AA_i+M_i,\qquad  M_{i+1}=MA_i\\[2mm]
 A_1=A,\qquad M_1=M.
 \end{array}
 \right.
 \end{equation}
 If
 \begin{equation}\label{2aaz-xeds.3}
 \rank \left(\begin{array}{ccccccc}B & A_1B & A_2B &\cdots &A_{2n+1}B \\[2mm]
 0 &  B &  A_1B &\cdots & A_{2n}B
 \end{array}\right)=2n,
 \end{equation}
then the equation \eqref{0.3} is memory-type null controllable. If, additionally, $\det \wt M\not=0$, then the condition \eqref{2aaz-xeds.3} is also necessary for \eqref{0.3} to be memory-type null controllable.
\end{theorem}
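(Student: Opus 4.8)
The plan for all three sufficiency statements is uniform: each reduces, through Theorem~\ref{EC-FD}(i) (and Proposition~\ref{prop-2}), to verifying the unique continuation property \eqref{UC-1}, and each rank condition turns out to be nothing but the algebraic reformulation of \eqref{UC-1} obtained by Taylor expanding the adjoint state at $t=T$. Concretely, I would first prove by induction on $k$ the identity
\begin{equation*}
\frac{d^k w}{dt^k}(t)=(-1)^k\Big[A_k^\top w(t)+\int_t^T M_k(s-t)^\top w(s)\,ds-\wt M_k(T-t)^\top z_T\Big],\qquad k\ge 1,
\end{equation*}
with $A_k,M_k(\cdot),\wt M_k(\cdot)$ as in \eqref{m1az2}; for $k=1$ this is exactly \eqref{adj0zx-0}, and the inductive step uses only Leibniz' rule for $\frac{d}{dt}\int_t^T(\cdot)\,ds$ together with the three recursions in \eqref{m1az2}. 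Evaluating at $t=T$ (the integral term dies) gives $w(T)=w_T$ and $\frac{d^k w}{dt^k}(T)=(-1)^k\big(A_k^\top w_T-\wt M_k(0)^\top z_T\big)$ for $k\ge1$. Hence if $B^\top w\equiv 0$ on $[0,T]$, then $B^\top\frac{d^k w}{dt^k}(T)=0$ for every $k\ge0$, which says precisely that the row vector $(w_T^\top,-z_T^\top)$ lies in the left null space of the (infinite) matrix in \eqref{2zxeds.3}.

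For part (i) this already finishes the argument: if that matrix has rank $2n$ its left null space is trivial, so $w_T=z_T=0$, in particular \eqref{UC-1} holds, and Theorem~\ref{EC-FD}(i) yields memory-type null controllability. For part (ii) I would first record, by an induction from \eqref{m1az2} using the hypothesis $\frac{d^i\wt M}{dt^i}(0)=\wt M(0)G_i$, the identity $\wt M_j(0)=\wt M(0)F_{j-1}$ ($j\ge1$, $F_0:=I$), and observe that analyticity gives $\wt M(t)=\wt M(0)H(t)$ with $H(t)=\sum_{i\ge0}\frac{t^i}{i!}G_i$, $G_0:=I$. Then the relation above reads: $\big(w_T^\top,-(\wt M(0)^\top z_T)^\top\big)$ annihilates the matrix in \eqref{2z-xeds.3}; if its rank is $2n$ we get $w_T=0$ and $\wt M(0)^\top z_T=0$, whence $\wt M(t)^\top z_T=H(t)^\top\wt M(0)^\top z_T\equiv0$, so \eqref{UC-1} holds again.

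For part (iii), with $M(\cdot)\equiv M$ and $\wt M(\cdot)\equiv\wt M$ the recursions collapse to \eqref{m1z2} and $\wt M_j(0)=\wt M A_{j-1}$ ($A_0:=I$); factoring $\wt M$ into the null-space vector, the relation becomes: $\big(w_T^\top,-(\wt M^\top z_T)^\top\big)$ annihilates the matrix whose $j$-th block is $\binom{A_jB}{A_{j-1}B}$, which is precisely \eqref{2aaz-xeds.3} truncated to $j=0,\dots,2n+1$. A one-line induction identifies $\binom{A_jB}{A_{j-1}B}=\mathcal A^{\,j}\binom{B}{0}$ with $\mathcal A=\left(\begin{smallmatrix}A&M\\ I&0\end{smallmatrix}\right)\in\mathbb{R}^{2n\times2n}$, so the matrix in \eqref{2aaz-xeds.3} is a truncation of the Kalman matrix of the pair $\big(\mathcal A,\binom{B}{0}\big)$; since by Cayley--Hamilton the column span of that Kalman matrix already stabilises within $2n$ blocks, \eqref{2aaz-xeds.3} is equivalent to controllability of $\big(\mathcal A,\binom{B}{0}\big)$, i.e. to triviality of its left null space, and sufficiency follows as before. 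For the necessity when $\det\wt M\ne0$, take $G=M\wt M^{-1}$ and $\wt G=0$ (so $M=G\wt M$ and $\wt M'=\wt G\wt M$); then Theorem~\ref{EC-FD}(ii) applies, and memory-type null controllability forces $B^\top w\equiv0\Rightarrow w_T=0,\ \wt M^\top z_T=0$, i.e. $(w_T,z_T)=(0,0)$ by invertibility of $\wt M$. If \eqref{2aaz-xeds.3} failed, I would pick $0\ne(\xi^\top,\eta^\top)$ in the left null space of the Kalman matrix, run \eqref{adj0zx-0} with $w_T=\xi$ and $z_T=-(\wt M^\top)^{-1}\eta$, so that $B^\top\frac{d^k w}{dt^k}(T)=0$ for all $k$; since $w$ is real-analytic in $t$ (constant coefficients), $B^\top w\equiv0$ on $[0,T]$ while $(w_T,z_T)\ne(0,0)$, a contradiction. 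Hence \eqref{2aaz-xeds.3} is necessary.

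I expect the only genuine effort to be bookkeeping rather than conceptual: getting the inductive identity for $\frac{d^k w}{dt^k}$ (and its signs) exactly right, and checking $\wt M_j(0)=\wt M(0)F_{j-1}$ in part (ii); once one sees that \eqref{UC-1} is the thing to be established, the rank conditions are just its Taylor-coefficient form, finitely truncated via Cayley--Hamilton in the constant-kernel case. The one point worth flagging is that in part (iii) the factor $\wt M$ (resp. $\wt M(0)$ in part (ii)) must be absorbed into the left-null-space vector, which is exactly why invertibility of $\wt M$ is what upgrades the rank condition from sufficient to necessary.
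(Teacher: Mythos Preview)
Your proposal is correct and follows essentially the same route as the paper: derive the inductive formula for $\frac{d^k w}{dt^k}$, evaluate at $t=T$, read off the left-null-space relations, and invoke Theorem~\ref{EC-FD}. The only noticeable difference is in part~(ii): the paper, after obtaining $w_T=0$ and $\wt M(0)^\top z_T=0$, shows that all time-derivatives of $w$ vanish at $T$ and uses analyticity of $w$ to conclude $w\equiv0$, then reads $\wt M(T-t)^\top z_T=0$ off the adjoint equation; your factorisation $\wt M(t)=\wt M(0)H(t)$ (from $\wt M^{(i)}(0)=\wt M(0)G_i$ and analyticity of $\wt M$) reaches $\wt M(t)^\top z_T=0$ directly and is a clean shortcut. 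In part~(iii) your block matrix $\mathcal A=\left(\begin{smallmatrix}A&M\\ I&0\end{smallmatrix}\right)$ differs from the paper's $\left(\begin{smallmatrix}A&I\\ M&0\end{smallmatrix}\right)$, but the two formulations are equivalent and the Cayley--Hamilton truncation and the necessity argument are the same.
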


\begin{proof}
(i) Suppose that for some $w_T,z_T\in \mathbb{R}^n$, the corresponding solution $w(\cdot)$ of (\ref{adj0zx-0}) satisfies $B^\top w(\cdot)\equiv 0$ in $[0,T]$.

It is easy to see that $B^\top w(\cdot)\equiv 0$ in $[0,T]$ gives
 \begin{equation}\label{zx1m}
   B^\top w_T=0,
 \end{equation}
and
 \begin{equation}\label{zx2m}
   0=-B^\top w_t=B^\top \left(A_1^\top  w + \int_t^TM_1(s-t)^\top  w(s)ds-\wt M_1(T-t)^\top z_T\right)\quad \hbox{in } [0,T].
 \end{equation}

By (\ref{zx2m}) and using the equation (\ref{adj0zx-0}), we find that
 \begin{equation}\label{zx3m}
   B^\top A_1^\top w_T-B^\top \wt M_1(0)^\top z_T=0,
 \end{equation}
and
\begin{equation}\label{zx4m}
 \begin{array}{ll}
   \ds w_{tt}&\ds=-A_1^\top  w_t +M_1(0)^\top  w+\int_t^TM_1'(s-t)^\top  w(s)ds-\wt M_1'(T-t)^\top z_T\\[2mm]
   &\ds=A_1^\top  A^\top w +\int_t^TA_1^\top M(s-t)^\top  w(s)ds-A_1^\top \wt M(T-t)^\top z_T\\[2mm]
   &\quad\ds+M_1(0)^\top  w+\int_t^TM_1'(s-t)^\top  w(s)ds-\wt M_1'(T-t)^\top z_T\\[2mm]
   &\ds =A_2^\top  w + \int_t^T M_2(s-t)^\top w(s)ds-\wt M_2(T-t)^\top z_T \quad \hbox{in } [0,T].
 \end{array}
\end{equation}

More generally, we have
 \begin{equation}\label{zxim}
   B^\top A_i^\top w_T-B^\top \wt M_i(0)^\top z_T=0,
 \end{equation}
and
\begin{equation}\label{zxi=1m}
   \frac{d^{i+1}w}{dt^{i+1}}=(-1)^{i+1}A_{i+1}^\top  w + (-1)^{i+1}\int_t^T M_{i+1}(s-t)^\top w(s)ds+(-1)^i\wt M_{i+1}(T-t)^\top z_T \quad \hbox{in } [0,T].
\end{equation}

By (\ref{zx1m}), (\ref{zx3m}) and (\ref{zxim}), we end up with
 \begin{equation}\label{zxi0m}
 (w_T^\top,-z_T^\top)\left(\begin{array}{ccccccc}B & A_1B & A_2B &\cdots &A_iB &A_{i+1}B &\cdots\\[2mm]
 0 & \wt M_1(0)B & \wt M_2(0)B &\cdots &\wt M_i(0)B &\wt M_{i+1}(0)B &\cdots
 \end{array}\right)=0.
 \end{equation}
By (\ref{2zxeds.3}) and (\ref{zxi0m}), we conclude that $w_T=z_T=0$. Hence, by the first conclusion of Theorem \ref{EC-FD}, we conclude that  \eqref{0.3} is memory-type null controllable.

\medskip

(ii) As in (i), we suppose that for some $w_T,z_T\in \mathbb{R}^n$, the corresponding solution $w(\cdot)$ of (\ref{adj0zx-0}) satisfies $B^\top w(\cdot)\equiv 0$ in $[0,T]$. Then, we have (\ref{zx1m}). By (\ref{m1az2}), \eqref{2zads.3} and $\left.d^i\wt M(t)/dt^i\right|_{t=0}=\wt M(0)G_i$, it is easy to check that
 \begin{equation}\label{z-xim}
 \left\{
  \begin{array}{ll}
  \ds\wt M_1(0)=\wt M(0),\\[2mm]\ds
  \wt M_{i+1}(0) =\wt M(0)(A_i+G_1A_{i-1}+\cdots+G_{i-1}A_1+G_i)=\wt M(0)F_i,\quad i=1,2,\cdots.
  \end{array}
  \right.
 \end{equation}
Hence, (\ref{zx3m}) reads
 \begin{equation}\label{zx--3m}
   B^\top A_1^\top w_T-B^\top \wt M(0)^\top z_T=0,
 \end{equation}
and (\ref{zxim}) is specialized as
\begin{equation}\label{zxi00m}
   B^\top A_i^\top w_T-B^\top F_{i-1}^\top \wt M(0)^\top z_T=0, \quad i=2,3,\cdots.
 \end{equation}

By (\ref{zx1m}), (\ref{zx--3m}) and (\ref{zxi00m}), we obtain that
 \begin{equation}\label{zxddi0m}
 (w_T^\top,-z_T^\top \wt M(0))\left(\begin{array}{ccccccc}B & A_1B & A_2B &\cdots &A_{i+1}B &A_{i+2}B &\cdots\\[2mm]
 0 & B & F_1B &\cdots &F_iB &F_{i+1}B &\cdots
 \end{array}\right)=0.
 \end{equation}
By (\ref{2z-xeds.3}) and (\ref{zxddi0m}), we see that
 \begin{equation}\label{zxdd110m}
 w_T=\wt M(0)^\top z_T=0.
 \end{equation}
By (\ref{zxi=1m}), (\ref{z-xim}) and (\ref{zxdd110m}), it follows that
\begin{equation}\label{zxi=11m}
  \left. \frac{d^{i+1}w}{dt^{i+1}}\right|_{t=T}=(-1)^{i+1}A_{i+1}^\top  w_T +(-1)^iF_i^\top \wt M(0)^\top z_T=0.
\end{equation}

Since both $M(\cdot)$ and $\wt M(\cdot)$ are analytic in $[0,T]$, so is $w(\cdot)$. By (\ref{zxi=11m}), we conclude that $w(\cdot)\equiv 0$ in $[0,T]$. Hence, by the first equation in (\ref{adj0zx-0}), $\wt M(t)^\top z_T =0$  in $[0,T]$. Now, by the first conclusion of Theorem \ref{EC-FD}, the equation  \eqref{0.3} is memory-type null controllable.

\medskip

(iii) We can use the result in (ii). For the present case, it is easy to check that the $F_i$ defined by (\ref{2zads.3}) is specialized to $F_i=A_i$. We claim that
 \begin{equation}\label{2zranxeds.3}
 \begin{array}{ll}
 \rank \left(\begin{array}{ccccccc}B & A_1B & A_2B &\cdots &A_iB &A_{i+1}B &\cdots\\[2mm]
 0 &  B &  A_1B &\cdots &A_{i-1} B & A_iB &\cdots
 \end{array}\right)\\[2mm]
 =\rank\left(\begin{array}{ccccccc}B & A_1B & A_2B &\cdots &A_{2n+1}B \\[2mm]
 0 &  B &  A_1B &\cdots & A_{2n}B
 \end{array}\right).
 \end{array}
 \end{equation}

It is easy to see that (\ref{m1z2}) is a special case of (\ref{m1az2}). From (\ref{m1z2}), we see that
  $$
   \left(
   \begin{array}{ll}
   A_{i+1}\\[2mm]
   M_{i+1}
   \end{array}
   \right)=
   \left(
   \begin{array}{ll}
   A&I_n\\[2mm]
   M&0
   \end{array}
   \right)
   \left(
   \begin{array}{ll}
   A_i\\[2mm]
   M_i
   \end{array}
   \right).
  $$
Hence,
 \begin{equation}\label{x1}
      \left(
   \begin{array}{ll}
   A_{i+1}\\[2mm]
   M_{i+1}
   \end{array}
   \right)=
   \left(
   \begin{array}{ll}
   A&I_n\\[2mm]
   M&0
   \end{array}
   \right)^i
   \left(
   \begin{array}{ll}
   A\\[2mm]
   M
   \end{array}
   \right).
 \end{equation}

Denote by $\lambda^{2n}+a_1\lambda^{2n-1}+a_2\lambda^{2n-2}+\cdots+a_{2n}$ the characteristic polynomial of $\left(
   \begin{array}{ll}
   A&I_n\\[2mm]
   M&0
   \end{array}
   \right)
 $, where $a_1,a_2,\cdots,a_{2n}\in \mathbb{R}$. By the Hamilton-Cayley theorem, it follows that
  \begin{equation}\label{x2}
     \left(
   \begin{array}{ll}
   A&I_n\\[2mm]
   M&0
   \end{array}
   \right)^{2n}+a_1\left(
   \begin{array}{ll}
   A&I_n\\[2mm]
   M&0
   \end{array}
   \right)^{2n-1}+a_2\left(
   \begin{array}{ll}
   A&I_n\\[2mm]
   M&0
   \end{array}
   \right)^{2n-2}+\cdots+a_{2n}I_{2n}=0.
    \end{equation}
Combining \eqref{x1} and \eqref{x2}, we have
  $$
  \begin{array}{ll}
   \ds\left(
   \begin{array}{ll}
   A_{2n+1}\\[2mm]
   M_{2n+1}
   \end{array}
   \right)=\left(
   \begin{array}{ll}
   A&I_n\\[2mm]
   M&0
   \end{array}
   \right)^{2n}
   \left(
   \begin{array}{ll}
   A\\[2mm]
   M
   \end{array}
   \right)\\[5mm]
   =-a_1\left(
   \begin{array}{ll}
   A&I_n\\[2mm]
   M&0
   \end{array}
   \right)^{2n-1}
   \left(
   \begin{array}{ll}
   A\\[2mm]
   M
   \end{array}
   \right)-a_2\left(
   \begin{array}{ll}
   A&I_n\\[2mm]
   M&0
   \end{array}
   \right)^{2n-2}
   \left(
   \begin{array}{ll}
   A\\[2mm]
   M
   \end{array}
   \right)-\cdots-a_{2n}\left(
   \begin{array}{ll}
   A\\[2mm]
   M
   \end{array}
   \right)\\[5mm]
   \ds=-a_1\left(
   \begin{array}{ll}
   A_{2n}\\[2mm]
   M_{2n}
   \end{array}
   \right)-a_2\left(
   \begin{array}{ll}
   A_{2n-1}\\[2mm]
   M_{2n-1}
   \end{array}
   \right)-\cdots-a_{2n}\left(
   \begin{array}{ll}
   A_1\\[2mm]
   M_1
   \end{array}
   \right).
   \end{array}
  $$
This gives
 \begin{equation}\label{x3}
 A_{2n+1}=-a_1A_{2n}-a_2A_{2n-1}-\cdots-a_{2n}A_1.
  \end{equation}
Similarly, 
 \begin{equation}\label{xsd3}
 A_{2n+2}=-a_1A_{2n+1}-a_2A_{2n}-\cdots-a_{2n}A_2.
  \end{equation}
Combining (\ref{x3}) and (\ref{xsd3}), we find that
 \begin{equation}\label{xs--d3}
 \left(\begin{array}{ll}A_{2n+2}\\[2mm] A_{2n+1}\end{array}
   \right)=-a_1\left(\begin{array}{ll}A_{2n+1}\\[2mm] A_{2n}\end{array}
   \right)-a_2\left(\begin{array}{ll}A_{2n}\\[2mm] A_{2n-1}\end{array}
   \right)-\cdots-a_{2n}\left(\begin{array}{ll}A_2\\[2mm] A_1\end{array}
   \right).
  \end{equation}
Inductively, from \eqref{xs--d3}, one can show that each $\left(\begin{array}{ll}A_{k+1}\\[2mm] A_k\end{array}
   \right)$ ($k\ge 2n+1$) can be expressed as a linear combination of $\left(\begin{array}{ll}A_2\\[2mm] A_1\end{array}
   \right),\left(\begin{array}{ll}A_3\\[2mm] A_2\end{array}
   \right),\cdots, \left(\begin{array}{ll}A_{2n+1}\\[2mm] A_{2n}\end{array}
   \right)$. Consequently, (\ref{2zranxeds.3}) is verified.

By the result in (ii) and (\ref{2zranxeds.3}), it is easy to see that under the condition
\eqref{2aaz-xeds.3}, the equation \eqref{0.3} is memory-type null controllable.

If, additionally,  $\det \wt M\not=0$, then, we use the contradiction argument to show that the condition \eqref{2aaz-xeds.3} is necessary for \eqref{0.3} to be memory-type null controllable.  Assume that the equation (\ref{0.3}) is memory-type null controllable but the condition \eqref{2aaz-xeds.3} does not hold. Then, in view of \eqref{2zranxeds.3},
 $$\rank \left(\begin{array}{ccccccc}B & A_1B & A_2B &\cdots &A_iB &A_{i+1}B &\cdots\\[2mm]
 0 &  B &  A_1B &\cdots &A_{i-1} B & A_iB &\cdots
 \end{array}\right)<2n.
 $$
This implies that there is a $(w_T,z_T)\in \mathbb{R}^{2n}\setminus\{0\}$ satisfying
 \begin{equation}\label{zaai0m}
 (w_T^\top,-z_T^\top \wt M)\left(\begin{array}{ccccccc}B & A_1B & A_2B &\cdots &A_iB &A_{i+1}B &\cdots\\[2mm]
 0 &  B &  A_1B &\cdots &A_{i-1} B & A_iB &\cdots
 \end{array}\right)=0.
 \end{equation}
Clearly, this $(w_T,z_T)$ satisfies
 $$
 \left\{\begin{array}{ll}
 B^\top w_T=0, \\[2mm] B^\top A_1^\top w_T-B^\top \wt M^\top z_T=0,\\[2mm]
 B^\top A_i^\top w_T-B^\top A_{i-1}^\top \wt M^\top z_T=0, \qquad i=2,3,\cdots.
 \end{array}\right.
 $$
Hence, the corresponding solution $w(\cdot)$ of (\ref{adj0zx-0}) satisfies
 \begin{equation}\label{zxk}
 \left.\frac{d^kB^\top w(t)}{dt^k}\right|_{t=T}=0,\ \ \quad k=1,2,\cdots.
 \end{equation}

Since $B^\top w(\cdot)$ is an analytic function, (\ref{zxk}) implies that $B^\top w(\cdot)\equiv 0$. In view of Theorem \ref{EC-FD}, this leads to $w_T=z_T=0$,  a contradiction.
\end{proof}

\begin{remark}
It is possible to consider the memory-type null controllability problem for the following general system:
\begin{equation}\label{0.3-0}
 \left\{
 \begin{array}{ll}
 \ds y_t=A(t) y+\int_0^tM(t-s)y(s)ds+B(t) u,\quad t\in (0,T],\\[2mm]
 y(0)=y_0,
 \end{array}
 \right.
 \end{equation}
where $A \in L^{\infty}(0,T; \mathbb{R}^{n\times n})$ and  $B \in L^{\infty}(0,T; \mathbb{R}^{n\times m})$.  In this case, one can work with the extended system
\begin{equation}\label{0.3-1}
 \left\{
 \begin{array}{ll}
 \ds y_t=A(t) y+\int_0^tM(t-s)y(s)ds+B(t) u,\quad t\in (0,T],\\[5mm]
\ds z_t = \wt M(0)y + \int_0^t\wt M'(t-s)y(s)ds,\quad t\in (0,T], \\[5mm]
 y(0)=y_0, \ z(0)=0,
 \end{array}
 \right.
 \end{equation}
and the condition \eqref{0e1s} is equivalent to
$
y(T)= z(T) = 0$.

When both $M(\cdot)$ and $\wt M(\cdot)$ are constant matrices, i.e., $M(\cdot) \equiv M\in \mathbb{R}^{n\times n}$ and $\wt M(\cdot) \equiv\wt M\in \mathbb{R}^{n\times n}$, and  $M = G\wt M$ for some $G \in \mathbb{R}^{n\times n}$, the system \eqref{0.3-1} can be rewritten as
\begin{equation}\label{0.3-2}
 \left\{
 \begin{array}{ll}
 \ds y_t=A(t) y+Gz+B(t) u,\quad t\in (0,T],\\[2mm]
 z_t = \wt My,\quad t\in (0,T], \\[2mm]
 y(0)=y_0, \ z(0)=0,
 \end{array}
 \right.
 \end{equation}
and the controllability of the system \eqref{0.3-2} can be analyzed by means of the  Silverman-Meadows condition (See, for instance, \cite[Theorem 1.18, p. 11]{Coron}). However, for general kernels $M(\cdot)$ and $ \wt M(\cdot)$,  even if $A(\cdot) = A$ is a constant matrix, we do not know how to obtain a Silverman-Meadows condition for system \eqref{0.3-1}. In this sense, Theorem \ref{EC-2FD} can be viewed as a memory-type variant of the Silverman-Meadows condition for the case where both $A(\cdot) = A$ and $B(\cdot) = B$ are constant matrices.

\end{remark}


%
%

\section{Memory-type null controllability of parabolic equations}\label{ParabolicCase}

In this section, we analyze the memory-type null controllability for parabolic equations.

We begin with the following heat equation with a memory term and a fixed controller:
\begin{equation}\label{heat}
\left\{
\begin{array}{ll}
\displaystyle y_t-\Delta y + a\int_0^t y(s)ds   = u\chi_{\omega}(x) &  \mbox{in}  \    Q,  \\[2mm]
y = 0 & \mbox{on} \  \Sigma, \\[2mm]
y(0) = y_0 & \mbox{in} \ \Omega,
\end{array}
\right.
\end{equation}
where $a \in \mathbb{R}$. Clearly, when $\omega=\Omega$, the control $u$ can absorb the memory term ``$ a\int_0^t y(s)ds$", and therefore, one can easily obtain the null controllability of \eqref{heat} for this special case. However, when $\omega$ is a proper subset of $\Omega$, by  \cite{FI, G-I, Ha-Pan, ZG}, the equation \eqref{heat} is null controllable if and only if $a=0$, i. e. in the absence of memory terms. This indicates that \eqref{heat} is not null controllable (needless to say  memory-type null controllable) whenever $a\not=0$ and $\omega\subsetneq\Omega$. Because of this, and inspired by \cite{CRZ, Khapalov, MRR,  RZ2012} (and also \cite{Lions92, LiuY, Z} for the wave equations),  in order to obtain the memory-type null controllability for parabolic equations, we need to make the controller to move so that its support covers the whole domain $\Omega$ during the control time horizon $[0, T]$.

Now, for a given (space-independent) memory kernel $M (\cdot)\in L^1(0,T)$, we consider the following heat equation with  memory, and with  a moving control region $\omega(\cdot)(\subset\Omega)$:
\begin{equation}\label{heat-moving}
\left\{
\begin{array}{ll}
\displaystyle y_t-\Delta y + \int_0^t M(t-s) y(s)ds   = u\chi_{\omega(t)}(x) &  \mbox{in}  \    Q,  \\[2mm]
y = 0 & \mbox{on} \  \Sigma, \\[2mm]
y(0) = y_0 & \mbox{in} \ \Omega.
\end{array}
\right.
\end{equation}
In the next subsection we make precise the assumptions that are required for the moving control support and the consequences this leads to, that will be the key to address the control of this memory heat equation.

\subsection{Preliminaries on moving controls}

In \cite{CRZ}, devoted to the control of the system of viscoelasticity, the authors faced the same difficulty according to which the support of the control needs to move in time and cover the whole domain $\Omega$ to ensure the null-contrabillity of the full system. We recall here the main assumptions on the moving control in \cite{CRZ} and the results it leads to in terms of Carleman inequalities, that will play an essential role when considering the parabolic equation with memory.

We shall consider the control region $\omega(\cdot)$ determined by the evolution of a given reference subset through a flow $X(x,t,t_0)$, which is generated by some vector field $f\in C([0,T];W^{2,\infty}(\R ^n;$ $\R ^n))$, i.e. $X$ solves
$$
\left\{
\begin{array}{l}
\displaystyle \frac{\partial X(x,t,t_0)}{\partial t}= f ( t,X(x,t,t_0)),\ \ t\in [0,T],\\[2mm]
X(x,t_0,t_0)=x\in \R ^n.
\end{array}
\right.
$$
More precisely, we need the following condition (introduced in \cite{CRZ}):

\begin{assumption}\label{tso1}
There exists a flow $X(x,t,0)$ generated by some $f\in C([0,T];W^{2,\infty}(\R ^n;\R ^n))$, a bounded, smooth and  open set $\omega _0\subset \R ^n$, a curve $\Gamma (\cdot)\in C^\infty ([0,T];\R ^n)$,
and two numbers $t_1$ and $t_2$ with $0\le t_1 < t_2 \le T $ such that
 $$
\left\{
\begin{array}{ll}
\Gamma (t) \in X(\omega _0,t,0)\cap \Omega , \quad \forall\; t\in [0,T] , \\[2mm]
\overline{\Omega} \subset \displaystyle\bigcup_{t\in [0,T]} X(\omega _0,t,0) \equiv \{ X(x,t,0)\;|\; x\in \omega _0,\ t\in [0,T]\} , \\[2mm]
\Omega \setminus \overline{X(\omega _0,t,0) } \text{ is nonempty and connected for } t\in [0,t_1]\cup [t_2,T], \\ [2mm]
\Omega \setminus \overline{X(\omega _0,t,0) } \text{ has two (nonempty) connected components  for } t\in (t_1,t_2),\\[2mm]
\forall\; \gamma(\cdot)\in C([0,T]; \Omega ) , \ \exists\; \bar t\in [0,T]\hbox{ satisfying } \gamma (\bar t\,) \in X(\omega _0,\bar t,0).
\end{array}
\right.
 $$
\end{assumption}

We will need to use a known weight function, stated in the following result.
\begin{lemma}\label{weightpsi}
\label{weight}
{\rm (\cite{CRZ})} Let Assumption \ref{tso1} hold, and let $\omega$ and $\omega_1$ be  any two nonempty open sets in $\mathbb{R}^n$ such that
$\overline{\omega _0}\subset \omega _1$ and $ \overline{\omega _1} \subset \omega$.  Then there exist a number $\delta \in (0,T/2)$ and
 a function $\psi \in C^\infty ( \overline{Q})$ such that
$$
\left\{
\begin{array}{ll}
\nabla \psi(t,x) \ne 0,&\quad t\in[0,T],\ x\in \overline{\Omega} \setminus X(\omega _1,t,0),  \\[2mm]\displaystyle
\psi _t (t,x) \ne 0,  &  \quad  t\in[0,T],\ x\in \overline{\Omega}\setminus X(\omega _1,t,0),  \\[2mm]\displaystyle
\psi _t (t,x)  >0,  & \quad  t\in[0,\delta ],\ x\in \overline{\Omega}\setminus X(\omega _1,t,0),  \\[2mm]\displaystyle
\psi _t  (t,x) <0,  &  \quad  t\in[T-\delta ,T ],\ x\in \overline{\Omega}\setminus  X(\omega _1,t,0), \\[2mm]\displaystyle
\frac{\partial \psi}{\partial \nu}(t,x) \le 0,  & \quad  t\in [0,T ],\ x\in \partial \Omega , \\[2mm]\displaystyle
\psi (t,x) >\frac{3}{4}|\psi |_{L^\infty (Q) }, &\quad  t\in [0,T ],\ x\in \overline{ \Omega} .
 \end{array}
\right.
 $$
\end{lemma}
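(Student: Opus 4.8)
The plan is to construct $\psi$ in two stages, treating separately the ``elliptic'' requirements (non-vanishing of $\nabla_x\psi$ off the moving control set, and the sign of $\partial_\nu\psi$ on $\Sigma$) and the ``parabolic'' ones (non-vanishing of $\psi_t$ off that set, with the prescribed signs near $t=0$ and $t=T$), and to recover the last inequality only at the very end. Note first that every condition except the last is invariant under adding a constant to $\psi$, while $\psi>\frac{3}{4}|\psi|_{L^\infty(Q)}$ holds for $\psi+K$ with $K$ large as soon as $\psi$ is bounded and satisfies the rest. Hence it is enough to produce a bounded $\eta\in C^\infty(\overline Q)$ and $\delta\in(0,T/2)$ fulfilling the first five properties, and then to set $\psi=\eta+K$.

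For the elliptic part I would invoke, for each fixed $t$, the classical weight-function lemma of Fursikov and Imanuvilov (see \cite{FI}): for the fixed domain $\Omega$ and any nonempty open set compactly contained in $\Omega$, there is a $C^\infty$ function which is positive in $\Omega$, vanishes on $\partial\Omega$, and has non-vanishing gradient outside that set. Since $\Gamma(t)$ lies in the open set $X(\omega_0,t,0)$ and $\overline{\omega_0}\subset\omega_1$, a ball $B(\Gamma(t),r_0)$ of some radius $r_0>0$ independent of $t$ (by compactness of $[0,T]$) is contained in $X(\omega_1,t,0)\cap\Omega$. Applying the lemma with this moving ball as the hole, and choosing the resulting weight to depend smoothly on $t$ (the domain $\Omega$ does not move and $B(\Gamma(t),r_0)$ moves smoothly), one obtains a bounded $\eta^0\in C^\infty(\overline Q)$ with $\eta^0>0$ in $Q$, $\eta^0=0$ on $\Sigma$ (so $\partial_\nu\eta^0\le0$), and $\nabla_x\eta^0(t,x)\ne0$ whenever $x\in\overline\Omega\setminus X(\omega_1,t,0)$.

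For the parabolic part I would add to $\eta^0$ a correction that carries the desired behaviour of $\psi_t$ but contributes nothing to $\nabla_x\psi$; this is where the topology built into Assumption \ref{tso1} is used. Assuming $0<t_1<t_2<T$, as in the configurations one works with, choose $\delta\in(0,T/2)$ with $[0,\delta]\subset[0,t_1)$ and $[T-\delta,T]\subset(t_2,T]$. For $t\in[0,t_1]\cup[t_2,T]$ the set $\Omega\setminus\overline{X(\omega_1,t,0)}$ is connected, while for $t\in(t_1,t_2)$ it has two nonempty connected components $D_+(t)$ (``not yet swept by the control'') and $D_-(t)$ (``already swept''); for $t\le t_1$ one regards the whole complement as $D_+(t)$ and for $t\ge t_2$ as $D_-(t)$. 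That this picture persists upon replacing $\omega_0$ by $\omega_1$ follows from stability of the topological configuration under small perturbations of the hole together with compactness of $[0,T]$. I would then build $\rho\in C^\infty(\overline Q)$ which is constant in $x$ on each of $D_+(t)$ and $D_-(t)$ (so $\nabla_x\rho\equiv 0$ and $\partial_\nu\rho=0$ there), equal to $g_+(t)$ on $D_+(t)$ and to $g_-(t)$ on $D_-(t)$ with $g_+'>0$ and $g_-'<0$ on $[0,T]$, the two values being joined by a smooth interpolation supported inside $X(\omega_1,t,0)$. Then $\rho_t=g_+'>0$ on $D_+(t)$ and $\rho_t=g_-'<0$ on $D_-(t)$; in particular $\rho_t>0$ off the hole for $t\in[0,\delta]$, $\rho_t<0$ off the hole for $t\in[T-\delta,T]$, and $\{\rho_t=0\}$ meets $\overline\Omega$ only inside $X(\omega_1,t,0)$. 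Finally I take $\eta=\eta^0+\lambda\rho$ with $\lambda>0$ so large that $\lambda|\rho_t|$ dominates $|\eta^0_t|$ on the compact set where $x\in\overline\Omega\setminus X(\omega_1,t,0)$: then $\eta$ inherits the three conditions on $\eta_t$ from $\rho$ and the condition on $\nabla_x\eta$ from $\eta^0$, and $\psi=\eta+K$ is the desired weight.

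The main obstacle is the step producing $\rho$ across the transition times $t_1,t_2$ and, more generally, preventing $\rho_t$ (hence $\psi_t$) from vanishing anywhere on $\overline\Omega\setminus X(\omega_1,t,0)$ while it is forced to change sign between $t=0$ and $t=T$: at some intermediate time the zero set of $\psi_t$ must be nonempty in $\Omega$ and entirely hidden inside the moving control set, which is possible precisely because Assumption \ref{tso1} makes $X(\omega_0,t,0)$ (hence $X(\omega_1,t,0)$) disconnect $\Omega$ exactly on $(t_1,t_2)$; so the clause of Assumption \ref{tso1} producing two connected components is not a mere convenience but is what makes $\psi$ exist at all. A secondary technical point is the lateral boundary: because $\lambda$ must be large, one has to arrange the interpolation region of $\rho$ to stay away from $\partial\Omega$ (or to keep $\partial_\nu\eta^0$ bounded away from $0$) so that $\lambda\,\partial_\nu\rho$ does not spoil the sign of $\partial_\nu\psi$. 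All of this is carried out in detail in \cite{CRZ}.
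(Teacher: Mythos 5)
The paper does not prove this lemma: it is imported verbatim from \cite{CRZ}, so there is no in-paper argument to compare yours against. Judged on its own, your sketch does follow the strategy of the known construction in spirit --- a Fursikov--Imanuvilov-type spatial weight whose critical points are hidden in the moving set $X(\omega_1,t,0)$, corrected by a time-dependent term whose sign structure is dictated by the two connected components of $\Omega\setminus\overline{X(\omega_1,t,0)}$ on $(t_1,t_2)$ --- and you correctly identify that the two-component clause of Assumption \ref{tso1} is what allows $\psi_t$ to change sign between $t=0$ and $t=T$ while never vanishing off the control set.

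However, two steps as written would not go through. First, the lateral boundary condition: during $(t_1,t_2)$ the band $X(\omega_1,t,0)$ necessarily meets $\partial\Omega$ (both components $D_\pm(t)$ touch the boundary), so the interpolation region of $\rho$ cannot ``stay away from $\partial\Omega$'' as you propose; and your alternative fix fails too, since the FI lemma gives no positive lower bound on $|\partial_\nu\eta^0|$ on $\partial\Omega\cap X(\omega_1,t,0)$ and, even if it did, you choose $\lambda$ large \emph{afterwards}, so $\lambda\,\partial_\nu\rho$ would dominate anyway. What is actually needed is to build $\rho$ so that $\partial_\nu\rho\equiv 0$ on $\partial\Omega$ (e.g.\ make $\rho$ constant along normals near $\partial\Omega$), and you do not do this. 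Second, the smooth definition of $\rho$ across $t=t_1$ and $t=t_2$: the component $D_-(t)$ is born out of the interior of the hole, so points passing from the interpolation region into $D_-(t)$ must already carry the value $g_-(t)$ there; you name this as ``the main obstacle'' but give no construction, and since $g_+(t_1)\neq g_-(t_1)$ in general, continuity is not automatic. (A minor further point: Assumption \ref{tso1} allows $t_1=0$ or $t_2=T$, in which case your choice $[0,\delta]\subset[0,t_1)$ is impossible and $g_-$ cannot be monotone on all of $[0,T]$; you should either justify the reduction to $0<t_1<t_2<T$ or adapt $g_\pm$.) These are precisely the substantive parts of the proof in \cite{CRZ}; deferring them wholesale means the sketch records the right plan but does not yet constitute a proof.
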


As in \cite{CRZ}, we take a function  $g\in C^\infty (0,T)$ such that
\[
g(t) = \left\{
\begin{array}{ll}
\frac{1}{t} \qquad &\text{for } 0<t<\delta /2 , \\[2mm]
\text{\rm strictly decreasing}\qquad &\text{for }  0<t \le  \delta ,\\[2mm]
1 \qquad &\text{for } \delta \le t \le \frac{T}{2},\\[2mm]
g(T-t)\qquad &\text{for }   \frac{T}{2} \le t < T
\end{array}
\right.
\]
 and define the following two weight functions on $Q$:
 $$
\varphi (t,x) = g(t)\left(e^{\frac{3}{2} \lambda |\psi |_{L^\infty(Q)}} - e^{\lambda \psi (t,x)}\right), \qquad
\theta (t,x) = g(t) e^{ \lambda \psi (t,x)},
 $$
where $\lambda >0$ is a parameter.
For any functions $p\in H^{1,2}(Q)$ and $q\in L^2(Q)$ and parameter $s>0$, we introduce the notation
\begin{equation}\label{p-1}
I_H(p) = \int_Q \Big[  (s\theta )^{-1}  ( |\Delta p |^2  +|p_t|^2 )  + \lambda ^2 s\theta  |\nabla p|^2    +  \lambda ^4 (s\theta) ^3 |p|^2  \Big]e^{-2s\varphi} dxdt
\end{equation}
and
\begin{equation}\label{q-1}
I_O(q) = \lambda ^2  s\int_Q\theta  |  q |^2 e^{-2s\varphi }dxdt.
\end{equation}

In the sequel, we will use $C$ to denote a generic positive constant
which may vary from line to line (unless otherwise stated). The following two results are proved in \cite{CRZ}.

\begin{lemma}
\label{lem1}
Let Assumption \ref{tso1} hold and $\omega_1$ be given in Lemma \ref{weight}. Then, there exist two constants  $\lambda _0>0$ and  $s_0>0$ such that  the following estimate
\begin{equation}
I_H(p) \leq C \left( \int_Q |p_t + \Delta p |^2 e^{-2s\varphi} dxdt + \lambda ^4s^3\int_0^T  \int_{X(\omega _1,t,0) }    \theta ^3 |p|^2 e^{-2s\varphi} dxdt \right),  \label{E99}
\end{equation}
holds for any $\lambda \geq \lambda _0$, $s\geq s_0$ and
$p\in C([0,T];L^2(\Omega ))$ with $p_t + \Delta p\in L^2 (0,T;L^2(\Omega ))$.\end{lemma}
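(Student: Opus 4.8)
The statement is a Carleman inequality of Fursikov--Imanuvilov type for the parabolic operator $p\mapsto p_t+\Delta p$, now carrying the genuinely time-dependent weight $\varphi$, and it is exactly the estimate established in \cite{CRZ}; the plan is therefore to run the classical Carleman scheme, the only new ingredients being that the spatial weight $\psi$ depends on $t$ and that the observation region $X(\omega_1,t,0)$ moves. First I would conjugate: set $q=e^{-s\varphi}p$, compute $e^{-s\varphi}(\partial_t+\Delta)(e^{s\varphi}q)$, and split the conjugated operator into its formally self-adjoint part $P_1q=\Delta q+s^2|\nabla\varphi|^2q+s\varphi_tq$ and its formally skew-adjoint part $P_2q=q_t+2s\nabla\varphi\cdot\nabla q+2s(\Delta\varphi)q$, so that $P_1q+P_2q=g+Rq$, where $g=e^{-s\varphi}(p_t+\Delta p)$ and $Rq$ collects the leftover zeroth-order terms and is of strictly lower order in the large parameter $s\theta$ than the terms against which it will be weighed.

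Next I would square this identity in $L^2(Q)$ to get $\|P_1q\|^2+\|P_2q\|^2+2(P_1q,P_2q)_{L^2(Q)}\le C(\|g\|^2+\|Rq\|^2)$ and expand the cross term $(P_1q,P_2q)$ through integration by parts in $x$ and $t$. The bookkeeping is arranged to produce, as dominant interior contribution, a positive multiple of $\lambda^4s^3\int_Q\theta^3|\nabla\psi|^4|q|^2e^{-2s\varphi}+\lambda^2s\int_Q\theta|\nabla\psi|^2|\nabla q|^2e^{-2s\varphi}$, every other interior term being either of lower order in $s\theta$ or controllable near the two ends of the time interval by means of the signs $\psi_t>0$ on $[0,\delta]$ and $\psi_t<0$ on $[T-\delta,T]$. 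The boundary terms on $\Sigma$ carry the good sign because $\partial\psi/\partial\nu\le0$ on $\partial\Omega$, and the time-boundary contributions at $t=0$ and $t=T$ vanish because the singular factor $g(t)\to+\infty$ there forces $e^{-2s\varphi}$, and all the relevant derivatives, to vanish.

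Then comes the absorption. Since Lemma \ref{weight} gives $|\nabla\psi|\ge c>0$ on $\overline{\Omega}\setminus X(\omega_1,t,0)$, on that set the term $\lambda^4s^3\int\theta^3|\nabla\psi|^4|q|^2$ dominates $\|Rq\|^2$, the zeroth-order pieces of $P_1q,P_2q$, and all the remaining lower-order terms once $\lambda$ and then $s$ are taken large; on the complementary moving set I would add $\lambda^4s^3\int_0^T\int_{X(\omega_1,t,0)}\theta^3|q|^2e^{-2s\varphi}$ to both sides, and, after inserting a cut-off supported in $X(\omega,t,0)$ and undoing the substitution $q=e^{-s\varphi}p$, this reproduces exactly the local term of \eqref{E99}. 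Finally, to recover the full $I_H(p)$ I would use $\Delta q=P_1q-s^2|\nabla\varphi|^2q-s\varphi_tq$ and the analogous identity for $q_t$ to estimate $(s\theta)^{-1}\|\Delta q\|^2$ and $(s\theta)^{-1}\|q_t\|^2$ by quantities already bounded, and translate $|q|^2$ and $|\nabla q|^2$ back to $|p|^2$ and $|\nabla p|^2$ via $\nabla q=e^{-s\varphi}(\nabla p-s\,p\,\nabla\varphi)$, the commutator terms being swallowed by the dominant $|q|^2$ term.

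The step I expect to be the real obstacle is the positivity analysis of the cross term with the $t$-dependent weight. In the classical heat Carleman estimate $\psi$ is independent of $t$, so one only needs $\nabla\psi\ne0$ off the observation set; here $\psi_t$, $\nabla\psi_t$ and $\psi_{tt}$ enter the integration by parts, and one must check that, weighed against the singular factor $g(t)$ — for which $|g'|\le Cg^2$ and $|g''|\le Cg^3$ — each of these contributions is indeed of strictly lower order than $\lambda^4s^3\theta^3|\nabla\psi|^4|q|^2$ (and hence absorbable), while near $t=0,T$ the signs $\psi_t>0$ and $\psi_t<0$ of Lemma \ref{weight} are precisely what is needed to dispose of the terms that are not manifestly of lower order. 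One must also keep in mind that $\nabla\psi\ne0$ and $\psi_t\ne0$ hold only \emph{outside} the moving set $X(\omega_1,t,0)$, which is exactly why the moving observation term in \eqref{E99} cannot be removed.
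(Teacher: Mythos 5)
The paper does not prove Lemma \ref{lem1} at all: it is quoted verbatim from \cite{CRZ}, where it is established by exactly the Fursikov--Imanuvilov scheme you outline (conjugation by $e^{-s\varphi}$, splitting into self-adjoint and skew-adjoint parts, expansion of the cross term, positivity from $\nabla\psi\neq 0$ off the moving set $X(\omega_1,t,0)$, the sign conditions on $\psi_t$ near $t=0,T$ and on $\partial\psi/\partial\nu$ on $\partial\Omega$). Your sketch is therefore essentially the same argument as the source proof, and you correctly isolate the genuinely nonstandard points, namely the $t$-dependence of $\psi$ and the fact that the nondegeneracy conditions hold only outside the moving observation region; the only detail left implicit is the usual density/regularization step needed to pass from smooth $p$ to $p\in C([0,T];L^2(\Omega))$ with $p_t+\Delta p\in L^2$.
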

\noindent

\begin{lemma}
\label{lem2}
Let Assumption \ref{tso1} hold and $\omega$ be given in Lemma \ref{weight}. Then, there exist two numbers $\lambda _1\geq \lambda _0$ and  $s_1\geq s_0$ such that the following inequality
\begin{equation}
 I_O(q) \leq C\left( \int_Q |q_t|^2 e^{-2s\varphi} dxdt + \lambda ^2 s^2\int_0^T\int_{X(\omega,t,0)  }  \theta ^2  |q|^2  e^{-2s\varphi} dxdt
\right), \label{E50}
\end{equation}
holds for any $\lambda \geq \lambda _1$, $s\geq s_1$ and
$q\in H^1(0,T;L^2(\Omega ))$.
\end{lemma}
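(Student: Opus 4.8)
Since $q$ is not required to solve any equation and \eqref{E50} contains no spatial derivatives, the estimate should be read as a one–dimensional (in time) Carleman estimate for the operator $\partial_t$, to be proved for a.e.\ fixed $x\in\Omega$ and then integrated. The mechanism that makes it possible is Lemma~\ref{weight}: since $\psi_t(t,x)\ne0$ — hence $\varphi_t(t,x)\ne0$ — for every $x\in\overline\Omega\setminus X(\omega_1,t,0)$, the weight is strictly monotone in time off the (slightly shrunk) moving control region, and the cushion between $\omega_1$ and $\omega$ will let us localize there at no cost.

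The plan is as follows. First I would choose $\chi\in C^1(\overline Q)$ with $0\le\chi\le1$, $\chi(t,x)=1$ for $x\in\overline\Omega\setminus X(\omega,t,0)$ and $\chi(t,x)=0$ for $x\in X(\omega_1,t,0)$; this is legitimate because $\overline{\omega_1}\subset\omega$ and each $X(\cdot,t,0)$ is a diffeomorphism, so $X(\overline{\omega_1},t,0)$ and $\overline\Omega\setminus X(\omega,t,0)$ remain at positive distance from one another, uniformly in $t\in[0,T]$. Set $p=\chi q\in H^1(0,T;L^2(\Omega))$, and for a.e.\ $x$ put $v=e^{-s\varphi}p$. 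Since $g$, hence $\varphi$, blows up as $t\to0^+$ and $t\to T^-$ while $p(\cdot,x)\in H^1(0,T)\hookrightarrow C[0,T]$, the function $v$ is continuous with $v(0,x)=v(T,x)=0$, and $e^{-s\varphi}p_t=v_t+s\varphi_t v$. Expanding $\int_0^T|v_t+s\varphi_t v|^2\,dt$ and integrating the cross term $2s\int_0^T\varphi_t v v_t\,dt=-s\int_0^T\varphi_{tt}v^2\,dt$ by parts (boundary terms vanish for the same reason), I would obtain
$$\int_0^T|p_t|^2e^{-2s\varphi}\,dt\ \ge\ \int_0^T\big(s^2\varphi_t^2-s\varphi_{tt}\big)\,v^2\,dt .$$

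The core step, and the one I expect to be the main obstacle, is the pointwise weight estimate
$$s^2\varphi_t(t,x)^2-s\varphi_{tt}(t,x)\ \ge\ c\,\lambda^2 s\,\theta(t,x),\qquad x\in\overline\Omega\setminus X(\omega_1,t,0),$$
for $\lambda\ge\lambda_1$, $s\ge s_1$. Writing $\Phi_\lambda=e^{\frac32\lambda|\psi|_{L^\infty(Q)}}$, one has $\varphi=g(\Phi_\lambda-e^{\lambda\psi})$ and $\varphi_t=g'(\Phi_\lambda-e^{\lambda\psi})-\lambda\psi_t\,\theta$, and the point is that, outside $X(\omega_1,t,0)$, these two terms never cancel: on $[0,\delta]$ one has $g'\le0$ \emph{and} $\psi_t>0$, on $[T-\delta,T]$ one has $g'\ge0$ and $\psi_t<0$ (Lemma~\ref{weight}), and on $[\delta,T-\delta]$ one has $g'\equiv0$; hence $|\varphi_t|\ge\lambda|\psi_t|\,\theta\ge c_0\lambda\theta$, where $c_0>0$ is the uniform lower bound of $|\psi_t|$ on the compact set $\{(t,x):x\in\overline\Omega\setminus X(\omega_1,t,0)\}$. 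This gives $s^2\varphi_t^2\ge c_0^2\lambda^2 s^2\theta^2\ge c_0^2\lambda^2 s\,\theta$ (using $s\theta\ge1$), so what remains is to absorb $s|\varphi_{tt}|$ into $\tfrac12 s^2\varphi_t^2$, and this is where the singularity of $g$ bites. On $[\delta/2,T-\delta/2]$, where $g,g',g''$ are bounded, $|\varphi_{tt}|\le C(\Phi_\lambda+\lambda^2\theta)\le C\lambda^2\theta^2$ — using $\Phi_\lambda\le C\theta^2$, a consequence of $\psi>\tfrac34|\psi|_{L^\infty(Q)}$ — so the absorption holds for $s$ beyond a threshold independent of $\lambda$; near $t=0$ and $t=T$, where $g(t)$ behaves like $1/t$ and $1/(T-t)$, one has $|\varphi_{tt}|\le C\,t^{-3}\Phi_\lambda$ whereas $|\varphi_t|\ge c\,t^{-2}\Phi_\lambda$, so $s^2\varphi_t^2$ dominates $s|\varphi_{tt}|$ by a factor $\sim s\,t^{-1}\Phi_\lambda\to+\infty$, and the absorption holds for all $s\ge1$ once $\lambda$ is large. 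Integrating this pointwise bound over $\{t:v(t,x)\ne0\}$ and combining with the previous display yields, for a.e.\ $x$, $c\,\lambda^2 s\int_0^T\theta|p|^2e^{-2s\varphi}\,dt\le\int_0^T|p_t|^2e^{-2s\varphi}\,dt$.

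Finally I would integrate in $x$ and undo the localization. On the left, $\chi\equiv1$ for $x\in\overline\Omega\setminus X(\omega,t,0)$, so we control $\lambda^2 s\int_0^T\!\int_{\Omega\setminus X(\omega,t,0)}\theta|q|^2e^{-2s\varphi}$; on the right, $|p_t|^2\le2|q_t|^2+2|\chi_t|^2|q|^2$ with $\chi_t$ bounded and supported in $X(\omega,t,0)\setminus X(\omega_1,t,0)$. Adding $\lambda^2 s\int_0^T\!\int_{X(\omega,t,0)}\theta|q|^2e^{-2s\varphi}$ to both sides to reconstruct $I_O(q)$, and estimating crudely $1\le\lambda^2 s^2\theta^2$ and $\lambda^2 s\theta\le\lambda^2 s^2\theta^2$ (valid since $s\theta\ge1$) on the observation region, one arrives at \eqref{E50}. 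Note that no boundary condition on $q$ is ever used, consistently with the purely temporal nature of this estimate.
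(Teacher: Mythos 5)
The paper does not reproduce a proof of Lemma \ref{lem2} (it is quoted from \cite{CRZ}), and your argument is a correct, complete reconstruction along exactly the lines of the proof given there: a one--dimensional temporal Carleman estimate obtained by conjugating $\partial_t$ with $e^{-s\varphi}$, using the sign structure of $g'$ and $\psi_t$ from Lemma \ref{weight} to get $|\varphi_t|\ge c_0\lambda\theta$ off $X(\omega_1,t,0)$, absorbing $s\varphi_{tt}$, and localizing with a flow-adapted cutoff. All the delicate points (vanishing of the boundary terms thanks to the blow-up of $g$, the bound $\Phi_\lambda\le\theta^2$ from $\psi>\tfrac34|\psi|_{L^\infty(Q)}$, and the use of $s\theta\ge1$ to reassemble $I_O(q)$) are handled correctly.
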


As a consequence of Lemma \ref{lem2}, we have the following result.

\begin{corollary}\label{lemma3}
Under the assumptions in Lemma \ref{lem2}, for any $\lambda \geq \lambda _1$ and $s\geq s_1$, $m\in\N$, and
$q\in H^m(0,T;L^2(\Omega ))$, the following estimate holds
\begin{equation}
 I_O(q) + \sum_{k=1}^{m-1}I_O(\partial^k_tq) \leq C\left( \int_Q |\partial^m_tq|^2 e^{-2s\varphi} dxdt + \int_0^T\int_{ X(\omega,t,0)  } (\lambda s \theta) ^{P(m)} |q|^2  e^{-2s\varphi} dxdt
\right), \label{E50-d}
\end{equation}
where $P(m)$ is polynomial in $m$.
\end{corollary}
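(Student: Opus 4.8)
The plan is to prove Corollary~\ref{lemma3} by induction on $m$, using Lemma~\ref{lem2} as the base case $m=1$ (where the sum $\sum_{k=1}^{m-1}$ is empty and $P(1)=2$, matching \eqref{E50} exactly). For the inductive step, suppose the estimate holds for $m-1$ with polynomial $P(m-1)$; I want to upgrade it to $m$. First I would apply Lemma~\ref{lem2} to the function $\partial_t^{m-1}q$ (legitimate since $q\in H^m(0,T;L^2(\Omega))$ implies $\partial_t^{m-1}q\in H^1(0,T;L^2(\Omega))$), which yields
\begin{equation*}
I_O(\partial_t^{m-1}q)\leq C\left(\int_Q|\partial_t^m q|^2 e^{-2s\varphi}dxdt+\lambda^2 s^2\int_0^T\int_{X(\omega,t,0)}\theta^2|\partial_t^{m-1}q|^2 e^{-2s\varphi}dxdt\right).
\end{equation*}
Then I would add this to the inductive hypothesis for $m-1$ (applied to $q$), which produces $I_O(q)+\sum_{k=1}^{m-2}I_O(\partial_t^k q)$ on the left bounded by $\int_Q|\partial_t^{m-1}q|^2 e^{-2s\varphi}+\int_0^T\int_{X(\omega,t,0)}(\lambda s\theta)^{P(m-1)}|q|^2 e^{-2s\varphi}$. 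The combined left-hand side now contains all the terms $I_O(q)+\sum_{k=1}^{m-1}I_O(\partial_t^k q)$ — wait, it contains $I_O(\partial_t^{m-1}q)$ but with the error term $\int_Q|\partial_t^{m-1}q|^2 e^{-2s\varphi}$ still on the right, so the real work is absorbing that volume term and the localized term $\int\int_{X(\omega,t,0)}\theta^2|\partial_t^{m-1}q|^2 e^{-2s\varphi}$.

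The key mechanism is absorption: the volume term $\int_Q|\partial_t^{m-1}q|^2 e^{-2s\varphi}dxdt$ is, up to the factor $\lambda^2 s\theta$, exactly $I_O(\partial_t^{m-1}q)$ — more precisely $\int_Q|\partial_t^{m-1}q|^2 e^{-2s\varphi}\le C(\lambda^2 s_1)^{-1}\inf_Q(\theta)^{-1}\cdot I_O(\partial_t^{m-1}q)$ is the wrong direction, so instead I would note $\int_Q|\partial_t^{m-1}q|^2 e^{-2s\varphi}dxdt = (\lambda^2 s)^{-1}\int_Q (\theta)^{-1}\cdot \lambda^2 s\theta|\partial_t^{m-1}q|^2 e^{-2s\varphi}$; since $\theta=g(t)e^{\lambda\psi}$ is bounded below by a positive constant on $\overline Q$ (because $g\ge c>0$ away from $t=0,T$, and near the endpoints $g\to\infty$ while $e^{\lambda\psi}\ge e^{\frac34\lambda|\psi|_{L^\infty}}>0$), we get $\int_Q|\partial_t^{m-1}q|^2 e^{-2s\varphi}\le C(\lambda^2 s)^{-1} I_O(\partial_t^{m-1}q)$. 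Hence for $\lambda\ge\lambda_1$ and $s\ge s_1$ large (which we may further enlarge, still only finitely many times), this term is absorbed into the left-hand side $I_O(\partial_t^{m-1}q)$ with a small constant. The remaining boundary/localized error $\lambda^2 s^2\int_0^T\int_{X(\omega,t,0)}\theta^2|\partial_t^{m-1}q|^2 e^{-2s\varphi}$ is the genuine obstacle.

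To handle $\int_0^T\int_{X(\omega,t,0)}\theta^2|\partial_t^{m-1}q|^2 e^{-2s\varphi}dxdt$, the idea is a localized integration by parts in $t$: introduce a cutoff $\zeta\in C_c^\infty$ adapted to the moving region (using a slightly larger moving set $X(\omega',t,0)$ with $\overline\omega\subset\omega'$, so one has room), integrate by parts $m-1$ times in $t$ to transfer derivatives off $\partial_t^{m-1}q$ onto $q$ and onto the smooth weights $\theta^2 e^{-2s\varphi}\zeta$; each derivative hitting a weight produces at worst an extra power of $\lambda s\theta$ (since $\partial_t\varphi, \partial_t\theta$ are controlled by $C\lambda s\theta^2$-type bounds, as in the standard Carleman machinery and as used in \cite{CRZ}), and every integration by parts generates interior terms of the form $\int\int (\text{lower-order }\partial_t^j q)(\text{weight})$ with $j<m-1$ together with terms carrying $\partial_t^{m-2}q,\dots,q$. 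One applies Cauchy-Schwarz and Young's inequality to the interior terms containing $\partial_t^j q$ with $1\le j\le m-2$: these are bounded by $\varepsilon\sum_{k=1}^{m-2}I_O(\partial_t^k q)$ (absorbed into the left, using the inductive hypothesis's left-hand side) plus $C_\varepsilon \int_0^T\int_{X(\omega',t,0)}(\lambda s\theta)^{Q(m)}|q|^2 e^{-2s\varphi}$ for a suitable polynomial exponent $Q(m)$. Choosing $P(m)=\max(P(m-1)+2, Q(m))+2$ (a polynomial in $m$ since $P(m-1)$ is and $Q(m)$ is polynomial in $m$ by construction — at each step the exponent grows by the fixed number of integrations by parts times the power-gain per weight-differentiation) closes the induction. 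I expect the bookkeeping of the integration-by-parts error terms and verifying that the exponent $P(m)$ remains polynomial (rather than exponential) in $m$ to be the main technical obstacle; the cleanest route is to observe that passing from $m-1$ to $m$ costs a bounded number of weight-differentiations and hence a bounded additive increment plus a bounded multiplicative factor in the exponent, so a linear recursion $P(m)\le P(m-1)+c$ suffices, giving $P(m)=O(m)$.
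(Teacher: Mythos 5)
Your argument is essentially the paper's own proof, reorganized as an induction on $m$: the paper rewrites $\partial_t^m q=f$ as a cascade of first--order equations $\partial_t q^{k-1}=q^{k}$ (with $q^k=\partial_t^k q$), applies Lemma~\ref{lem2} to every component on a nested family of moving regions $X(\omega_1,t,0)\subset\cdots\subset X(\omega,t,0)$, and then removes the localized terms in the higher derivatives exactly by the mechanism you describe --- a cutoff transported by the flow, integration by parts in $t$, and Young's inequality to absorb the interior remainders into the left--hand side. Two points in your write--up need care. First, you cannot literally ``integrate by parts $m-1$ times to transfer the derivatives off $\partial_t^{m-1}q$ onto $q$'': moving all the derivatives onto one factor of $|\partial_t^{m-1}q|^2$ produces $\partial_t^{2m-2}q$, which is not controlled. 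The derivatives must be peeled off one at a time, writing $|\partial_t^{k}q|^2=\partial_t^{k}q\,\partial_t(\partial_t^{k-1}q)$, integrating by parts once, and absorbing the resulting interior terms (one of which carries $\partial_t^{k+1}q$ and is eventually controlled by the $f$-term) before descending to $\partial_t^{k-1}q$ on a slightly larger region; your subsequent description of the error terms indicates this is what you intend, and it is precisely what the paper does, but it requires $m-1$ nested intermediate regions, not just one. Second, your claimed additive recursion $P(m)\le P(m-1)+c$ is too optimistic: since Young's inequality must produce the exact weights $(s\theta)^{-1}$ and $\lambda^2 s\theta$ required for absorption, the complementary localized term carries the squared deficit, and the exponent roughly doubles at each stage (the paper's own bookkeeping gives $\theta^2\to\theta^7\to\theta^{17}$). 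This does not affect the validity of the estimate, since any finite exponent suffices and $m=K+1$ is fixed in the application, but the growth of the exponent is multiplicative rather than additive. Your absorption of the volume term $\int_Q|\partial_t^{m-1}q|^2e^{-2s\varphi}$ using $\theta\ge 1$ is correct and is the one extra ingredient your inductive organization requires.
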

\begin{proof}
Assume $m\ge 2$. We consider the equation
$$
\partial^m_tq = f,
$$
which can be rewritten as
\begin{equation}\label{adjointk=kk-2-h}
\left\{
\begin{array}{ll}
\partial_tq^{m-1} = f, \\[2mm]
\partial_tq^{m-2} = q^{m-1},   \\[2mm]
\partial_tq^{m-3} = q^{m-2},   \\[2mm]
\vdots \\[2mm]
\partial_tq^{2} = q^{3}, \\[2mm]
\partial_tq^{1} = q^{2},   \\[2mm]
\partial_tq = q^1.
\end{array}
\right.
\end{equation}
Fix a sequence $\{\omega^k\}_{k=1}^{m-1}$ of nonempty open sets in $\R^n$ such that $\overline{\omega _0}\subset \omega_1$, $\overline{\omega _1}\subset\omega _2, \cdots$, $\overline{\omega _{m-2}}\subset\omega _{m-1}$, $\overline{\omega _{m-1}}\subset\omega$. Then, applying Lemma \ref{lem2} to each equation of \eqref{adjointk=kk-2-h}, we obtain, after absorbing the lower order terms, that
\begin{align}\label{M-1}
I_O(q) + \sum_{k=1}^{m-1}I_O(q^k) & \leq C\biggl( \int_Q |f|^2 e^{-2s\varphi} dxdt + \lambda ^2 s^2\int_0^T\int_{ X(\omega,t,0)  } \theta ^2  |q|^2  e^{-2s\varphi} dxdt  \nonumber \\[2mm]
&\quad+ \lambda ^2 s^2\sum_{k=1}^{m-1}  \int_0^T\int_{ X(\omega _{m-k},t,0)  }\theta ^2  |q^k|^2  e^{-2s\varphi} dxdt   \biggl).
\end{align}

Since $\overline{\omega _1}\subset\omega _2$, we introduce  a cut-off function $\xi\in C_0^\infty(\omega _2; [0,1]) $ such that $\xi=1$ in $\omega _1$ and consider $\zeta(t,x)=\xi (X(x,0,t) )$.
Then, by  \eqref{adjointk=kk-2-h}, it follows
 \begin{equation}\label{zxa1-1}
 \begin{array}{ll}
 \displaystyle\int_0^T\int_{ X(\omega _1,t,0)  } \theta ^2  |q^{m-1}|^2  e^{-2s\varphi} dxdt\\[5mm]
 \displaystyle\leq \int_Q \theta ^2 \zeta |q^{m-1}|^2  e^{-2s\varphi} dxdt
 =\int_Q \theta ^2 \zeta q^{m-1} \partial_tq ^{m-2} e^{-2s\varphi} dxdt.
 \end{array}
 \end{equation}
Moreover, for any $\varepsilon>0$, we have
 \begin{equation}\label{zxa1-2}
 \begin{array}{ll}
 \displaystyle\int_Q \theta ^2 \zeta q^{m-1} \partial_tq^{m-2}  e^{-2s\varphi} dxdt=-\int_Q q^{m-2}\partial_t[ \theta ^2 \zeta q^{m-1} e^{-2s\varphi}] dxdt\\[5mm]
 \ds=-\int_Q q^{m-2}\partial_t q^{m-1} \theta ^2 \zeta  e^{-2s\varphi} dxdt  -\int_Q q^{m-2} q^{m-1} \partial_t[ \theta ^2 \zeta e^{-2s\varphi}] dxdt \\[5mm]
 \displaystyle\leq \varepsilon\left[\frac{1}{\lambda^2s^2}\int_Q |\partial_tq^{m-1}|^2e^{-2s\varphi} dxdt+\frac{1}{s}\int_Q\theta|q^{m-1}|^2e^{-2s\varphi} dxdt\right]\\[5mm]
 \displaystyle\quad+\frac{C\lambda^2s^3}{\varepsilon}\int_0^T\int_{ X(\omega _2,t,0)  } \theta ^7  |q^{m-2}|^2  e^{-2s\varphi} dxdt,
 \end{array}
 \end{equation}
 where we have used the fact that $ |\partial_t(\theta^2 \zeta e^{-2s\varphi})| \leq Cs\zeta \theta^4 e^{-2s\varphi}$.

Hence, by \eqref{zxa1-1}--\eqref{zxa1-2}, \eqref{adjointk=kk-2-h} and \eqref{q-1}, we find that
  \begin{equation}\label{zxa1-3}
 \begin{array}{ll}
 \displaystyle \lambda ^2 s^2\int_0^T\int_{ X(\omega _1,t,0)  } \theta ^2  |q^{m-1}|^2  e^{-2s\varphi} dxdt\\[2mm]
 \displaystyle \leq \varepsilon\left[\int_Q |f|^2 e^{-2s\varphi} dxdt +I_O(q^{m-1})\right]+\frac{C\lambda^4s^5}{\varepsilon}\int_0^T\int_{ X(\omega _2,t,0)  } \theta ^7  |q^{m-2}|^2  e^{-2s\varphi} dxdt.
 \end{array}
 \end{equation}
 Combining \eqref{M-1} and \eqref{zxa1-3}, we end up with
\begin{align}\label{Mzz-1}
I_O(q) + \sum_{k=1}^{m-1}I_O(q^k) & \leq C\biggl( \int_Q |f|^2 e^{-2s\varphi} dxdt + \lambda ^2s^2\int_0^T\int_{ X(\omega,t,0)  } \theta ^2  |q|^2  e^{-2s\varphi} dxdt  \nonumber \\[2mm]
&\quad+ \lambda ^4 s^5\sum_{k=1}^{m-2}  \int_0^T\int_{ X(\omega _{m-k},t,0)  }\theta ^7  |q^k|^2  e^{-2s\varphi} dxdt   \biggl).
\end{align}
Similar to \eqref{zxa1-3}, we have
\begin{equation}\label{zzxa1-3}
 \begin{array}{ll}
 \displaystyle \lambda ^4 s^5\int_0^T\int_{ X(\omega _2,t,0) } \theta ^7  |q^{m-2}|^2  e^{-2s\varphi} dxdt\\[2mm]
 \displaystyle \leq \varepsilon\left[I_O(q^{m-1})+I_O(q^{m-2})\right]+\frac{C\lambda ^8 s^{11}}{\varepsilon}\int_0^T\int_{ X(\omega _3,t,0)  } \theta ^{17}  |q^{m-3}|^2  e^{-2s\varphi} dxdt.
 \end{array}
 \end{equation}
 Combining \eqref{Mzz-1} and \eqref{zzxa1-3}, we conclude that
\begin{align}\label{Mz2z-1}
I_O(q) + \sum_{k=1}^{m-1}I_O(q^k) & \leq C\biggl( \int_Q |f|^2 e^{-2s\varphi} dxdt + \lambda ^2s^2\int_0^T\int_{ X(\omega,t,0)  } \theta ^2  |q|^2  e^{-2s\varphi} dxdt  \nonumber \\[2mm]
&\quad+ \lambda ^8 s^{11}\sum_{k=1}^{m-3}  \int_0^T\int_{ X(\omega _{m-k},t,0)  }\theta ^{17}  |q^k|^2  e^{-2s\varphi} dxdt   \biggl).
\end{align}

Repeating the above argument, we prove \eqref{E50-d}.
\end{proof}

\subsection{Observability}
In order to consider the memory-type null controllability (with the memory kernel $\wt M(\cdot)$) of \eqref{heat-moving}, by \eqref{adj1-z0}, we introduce
the following adjoint system of \eqref{heat-moving}
\begin{equation}\label{heat-adj}
\left\{
\begin{array}{ll}
\displaystyle w_t=-\Delta w - \int_t^T M(s-t) w(s)ds  + \wt M(T-t)z_T &  \mbox{in}  \    Q,  \\[2mm]
w = 0 & \mbox{on} \  \Sigma, \\[2mm]
w(T) = w_T & \mbox{in} \ \Omega,
\end{array}
\right.
\end{equation}
where $w_T,z_T \in L^2(\Omega)$.

In what follows, we choose
 \begin{equation}\label{MM}
 M(t)=e^{at}\sum_{k=0}^K a_kt^k,\quad \wt M(t)=e^{at}\sum_{k=0}^K b_kt^k,
 \ee
where $K\in \mathbb{N}$, and $a, a_0,\cdots,a_K$, $b_0,\cdots,b_K$ are real constants.

We have the following observability result for \eqref{heat-adj}.

\begin{theorem}\label{teo1}
Let Assumption \ref{tso1} hold, $\omega$ be any open set in $\Omega$ such that $\overline{\omega_0} \subset \omega$, and $M$ and $\wt M$ be given by \eqref{MM}.
Then, solutions to \eqref{heat-adj} satisfy
\be \label{o-x}
|w(0)|_{L^2(\Omega)}^2 \leq C\int_0^T\int_{\omega(t)}|w|^2dxdt,\quad \forall\;w_T, z_T \in L^2(\Omega),
\ee
where $\omega(t) :=X(\omega,t,0)$.
\end{theorem}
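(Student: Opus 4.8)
The plan is to reduce the memory-type observability estimate \eqref{o-x} to the Carleman machinery recalled in Section \ref{ParabolicCase}, by exploiting the very special algebraic structure of the kernels in \eqref{MM}. The key observation is that both $M(t)=e^{at}\sum_{k=0}^K a_k t^k$ and $\wt M(t)=e^{at}\sum_{k=0}^K b_k t^k$ satisfy a constant-coefficient linear ODE of order $K+2$, namely $\big(\tfrac{d}{dt}-a\big)^{K+1}\tfrac{d}{dt}$ annihilates them (more precisely, any such function is killed by $(\partial_t-a)^{K+1}$ after one more differentiation, since $t^k e^{at}$, $k=0,\dots,K$, spans the kernel of $(\partial_t-a)^{K+1}$). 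Consequently, if $w$ solves the adjoint system \eqref{heat-adj}, then applying a suitable constant-coefficient differential operator $\mathcal{P}=\mathcal{P}(\partial_t)$ of order $N:=K+2$ to the equation eliminates \emph{both} the nonlocal memory term $\int_t^T M(s-t)w(s)\,ds$ \emph{and} the source $\wt M(T-t)z_T$: one checks that $\partial_t\big(\int_t^T M(s-t)w(s)\,ds\big)=-M(0)w-\int_t^T M'(s-t)w(s)\,ds$, so iterating and using that $\mathcal{P}$ annihilates $M$ turns the integral term into a finite linear combination of $w,\partial_t w,\dots,\partial_t^{N-1}w$, while $\mathcal{P}$ applied to $\wt M(T-t)z_T$ vanishes identically. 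The upshot is that $v:=\mathcal{P}(\partial_t)w$ (equivalently, a suitable linear combination of time derivatives of $w$) solves a \emph{pure local} heat equation $v_t=-\Delta v + (\text{lower-order in }\partial_t^j w)$ with homogeneous Dirichlet boundary conditions, to which the standard Carleman estimate of Lemma \ref{lem1} applies directly.

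The argument then proceeds as follows. First I would make precise the annihilating operator: write $\mathcal{P}(\partial_t)=(\partial_t-a)^{K+1}\partial_t$ of order $N=K+2$, verify $\mathcal{P}(\partial_t)M\equiv 0$ and $\mathcal{P}(\partial_t)\wt M\equiv 0$, and then compute, differentiating \eqref{heat-adj} repeatedly in $t$ (the parabolic smoothing makes $w$ smooth in time away from $t=T$, and a density/approximation argument handles the endpoint), that $v:=\mathcal{P}(\partial_t)w$ satisfies
\begin{equation}
v_t + \Delta v = \sum_{j=0}^{N-1} c_j(t)\, \partial_t^j w \quad\text{in }Q, \qquad v=0 \text{ on }\Sigma,
\end{equation}
where the $c_j$ are bounded (coming from derivatives of $M$ evaluated at $0$, hence constants times exponentials, plus the commutator of $\mathcal{P}(\partial_t)$ with $-\Delta$, which is zero since coefficients are constant). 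Next I apply Lemma \ref{lem1} to $p=v$, getting $I_H(v)$ bounded by $\int_Q|v_t+\Delta v|^2e^{-2s\varphi}$ plus the localized term $\lambda^4 s^3\int_0^T\int_{X(\omega_1,t,0)}\theta^3|v|^2 e^{-2s\varphi}$. The right-hand side source $\int_Q|v_t+\Delta v|^2 e^{-2s\varphi}\leq C\sum_{j=0}^{N-1}\int_Q|\partial_t^j w|^2 e^{-2s\varphi}$ is then controlled by $I_O(w)+\sum_{k=1}^{N-1}I_O(\partial_t^k w)$ — which is exactly what Corollary \ref{lemma3} estimates (with $m=N$), provided I can bound $\int_Q|\partial_t^N w|^2 e^{-2s\varphi}$ by $I_H(v)$ up to the allowed localized terms. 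Here $\partial_t^N w$ differs from $v=\mathcal{P}(\partial_t)w$ by lower-order time derivatives, and one absorbs everything by taking $\lambda,s$ large; the localized terms accumulate on $X(\omega,t,0)$ with weights $(\lambda s\theta)^{P(N)}$ as in \eqref{E50-d}.

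The remaining two points are bookkeeping but must be done carefully. One, the energy/recovery step: having bounded $I_H(v)+I_O(w)+\sum_{k}I_O(\partial_t^k w)$ by a localized integral of $|w|^2$ over $X(\omega,t,0)$ with polynomial-in-$(\lambda s\theta)$ weights, I need to convert this back to a clean bound $|w(0)|_{L^2(\Omega)}^2\leq C\int_0^T\int_{\omega(t)}|w|^2$ with $\omega(t)=X(\omega,t,0)$. This uses the standard device: the weight $\theta$ (hence $e^{-2s\varphi}$) is bounded above and below by positive constants on any compact time subinterval $[\delta,T-\delta]$, so on that interval $I_H(v)$ controls $\|w(t)\|_{L^2}$ via the equation, and a backward uniqueness / energy inequality for the adjoint system \eqref{heat-adj} propagates the estimate from an interior time down to $t=0$ (the memory term, being an $L^1$ kernel, is handled by Gronwall). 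The polynomial weights $(\lambda s\theta)^{P(N)}$ on the observation region are harmless since $\theta$ is bounded on $\mathrm{supp}$ of the cutoff, absorbing the $\lambda,s$ powers into $C$. Two, justifying the differentiations in time: $w_T,z_T\in L^2(\Omega)$ only, so $w$ is not a priori smooth; the standard fix is to prove \eqref{o-x} first for $w_T,z_T\in C_0^\infty(\Omega)$ (or in $D(\Delta^m)$ for $m$ large enough), where parabolic regularity gives $w\in H^N(0,T';L^2)\cap\dots$, and then pass to the limit using density and the fact that the constant $C$ does not depend on the smoothness. \textbf{The main obstacle} I anticipate is precisely the interplay in the Carleman step between the singular weight $\theta$ blowing up near $t=0$ and $t=T$ and the fact that $v=\mathcal{P}(\partial_t)w$ involves $N$ time derivatives, each of which can worsen the singularity of the source term near the endpoints; controlling $\int_Q|\partial_t^N w|^2 e^{-2s\varphi}$ and absorbing it on the left requires carefully tracking the powers of $g(t)$ (equivalently of $\theta$) and choosing $g$ flat enough near $T/2$ — this is exactly the reason Corollary \ref{lemma3} is stated with an unspecified polynomial $P(m)$, and the proof of Theorem \ref{teo1} should invoke it as a black box rather than re-deriving the weight arithmetic.
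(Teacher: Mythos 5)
Your proposal follows essentially the same route as the paper: reduce to $a=0$ by conjugating away the exponential, differentiate the adjoint equation $K+1$ times so that the polynomial kernels annihilate both the memory term and the $z_T$-source, apply the parabolic Carleman estimate (Lemma \ref{lem1}) to $\hat w=\partial_t^{K+1}w$ and the iterated ODE Carleman estimate (Corollary \ref{lemma3}) to the chain $\partial_t^{K+1}w=\hat w$, transfer the localized observation from $\hat w$ back to $w$ by integration by parts in time, and finish with an energy estimate. The only cosmetic slips are that the operator annihilating $M(s-t)$ and $\wt M(T-t)$ as functions of $t$ is $(\partial_t+a)^{K+1}$ rather than your $(\partial_t-a)^{K+1}\partial_t$ (and order $K+1$ already suffices), a point that becomes moot after the substitution $w\mapsto e^{-at}w$ used in the paper.
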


\begin{proof}
Without loss of generality, we assume that $a=0$ in \eqref{MM} (Otherwise we introduce a function transform $w(\cdot)\to e^{-a\cdot}w(\cdot)$ in \eqref{heat-adj}). Let us write
 \begin{equation}\label{Mz}
 Z = - \int_t^T M(s-t) w(s)ds  + \wt M(T-t)z_T.
 \ee
By \eqref{MM}, we have
\be
\partial^{K+1}_tZ = \sum_{k=0}^K(-1)^kk!a_k\partial_t^{K-k}w.
\ee
Hence, from \eqref{heat-adj}, we see that
\begin{equation}\label{adjointk=kk-1}
\left\{
\begin{array}{ll}
w_{t}+\Delta w = Z &  \mbox{in}  \    Q,  \\[2mm]
\displaystyle \partial^{K+1}_tZ =\sum_{k=0}^Kk!a_k\partial_t^{K-k}w &   \mbox{in}  \    Q,  \\[2mm]
w = 0 & \mbox{on} \  \Sigma.
\end{array}
\right.
\end{equation}

Now,  we take the $K+1$ time derivatives in the first and third equations in \eqref{adjointk=kk-1}. Write $\hat{w} = \partial^{K+1}_tw$. This leads to  the system
\begin{equation}\label{adjointk=kk-2-1}
\left\{
\begin{array}{ll}
\displaystyle\hat w_t+\Delta \hat w =\sum_{k=0}^Kk!a_k\partial_t^{K-k}w  &  \mbox{in}  \    Q,  \\[2mm]
\hat w = 0 & \mbox{on} \  \Sigma, \\[2mm]
\partial^{K+1}_tw =\hat{w} & \mbox{in} \ Q.
\end{array}
\right.
\end{equation}

We apply Lemma  \ref{lem1} and  Corollary \ref{lemma3} to equations in \eqref{adjointk=kk-2-1}. After absorbing the lower order terms, we get
  \begin{align}\label{E99-1-x}
 &I_{H}(\hat w) +I_{O}(w)  + \sum_{i=1}^{K}I_O(\partial^i_t w)    \\[2mm]
 & \leq C \left(  \int_0^T \int_{X(\omega',t,0) }  \lambda ^4  (s\theta )^3 |\hat w|^2 e^{-2s\varphi} dxdt + \int_0^T\int_{X(\omega,t,0)  }  ( \lambda s\theta )^{P(K+1)}  |w|^2
e^{-2s\varphi} dxdt   \right),  \nonumber
\end{align}
where $\omega' $ is a nonempty open subset in $\R^n$ such that $\overline{\omega'} \subset \omega$, and $P(K+1)$ is polynomial in $K$.

Using the last equation in \eqref{adjointk=kk-2-1}, similar to the proof of Corollary \ref{lemma3}, one can show that, for any $\varepsilon>0$,
 \begin{equation}\label{adz-2-1}
 \begin{array}{ll}
\displaystyle\int_0^T \int_{X(\omega',t,0) }  \lambda ^4  (s\theta )^3 |\hat w|^2 e^{-2s\varphi} dxdt \\[3mm]
\displaystyle\leq \varepsilon\left[\int_Q  (s\theta )^{-1} |\partial_t\hat w|^2e^{-2s\varphi} dxdt+I_{O}(w)  + \sum_{i=1}^{K}I_O(\partial^i_t w)\right]\\[3mm]
\displaystyle\quad+\frac{C}{\varepsilon} \int_0^T\int_{X(\omega,t,0)  }  ( \lambda s\theta )^{P(K+1)}  |w|^2
e^{-2s\varphi} dxdt .
\end{array}
\end{equation}

Combining \eqref{E99-1-x} and \eqref{adz-2-1}, we conclude that
 \begin{equation} \label{E99-1-y}
 I_{H}(\hat w) +I_{O}(w)  + \sum_{i=1}^{K}I_O(\partial^i_t w)   \leq C \int_0^T\int_{\omega(t)  }  ( \lambda s\theta )^{P(K)}    | w|^2e^{-2s\varphi} dxdt.
 \end{equation}

From the inequality \eqref{E99-1-y}, and applying the usual energy estimate to the equation \eqref{adjointk=kk-2-1},  we obtain easily the desired estimate \eqref{o-x}. This completes the proof of Theorem \ref{teo1}.
\end{proof}

By Proposition \ref{prop-2}, as a direct consequence of Theorem \ref{teo1}, we have the following memory-type null controllability result for the equation \eqref{heat-moving}:
\begin{theorem}\label{theo-4}
Under the assumptions in Theorem \ref{teo1}, for any $y_0 \in L^2(\Omega)$, there is a control $u\in L^2(Q)$ such that the corresponding solution $y(\cdot)$ to \eqref{heat-moving}, with the control support $\omega(t) =X(\omega,t,0)$ for $t\in(0,T)$, satisfies
$$
y(T) = \int_0^T \wt M(T-s) y(s)ds  = 0 \quad \text{in} \ \Omega.
$$
\end{theorem}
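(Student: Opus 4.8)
The plan is to obtain Theorem \ref{theo-4} as an immediate corollary of Proposition \ref{prop-2} together with the observability estimate of Theorem \ref{teo1}; the whole analytic weight of the argument is already carried by the latter, so the remaining task is only to check that \eqref{heat-moving} fits the abstract framework of Section \ref{GC} and that its adjoint and observation operator are exactly the ones appearing in Theorem \ref{teo1}. Concretely, I would recast \eqref{heat-moving} in the form \eqref{0.2} by taking $Y=U=L^2(\Omega)$, $A=\Delta$ with $D(A)=H^2(\Omega)\cap H^1_0(\Omega)$ (which generates the heat semigroup on $L^2(\Omega)$), memory operator $M(t-s)\,\mathrm{Id}\in\mathcal L(Y)$, and time-dependent control operator $B(t)\in\mathcal L(U;Y)$ given by $B(t)v=\chi_{\omega(t)}v$, with $\omega(t)=X(\omega,t,0)$. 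Since $M(\cdot)$ and $\wt M(\cdot)$ in \eqref{MM} are smooth, hence bounded, on $[0,T]$, the operator-valued maps $M(\cdot)\,\mathrm{Id}$ and $\wt M(\cdot)\,\mathrm{Id}$ lie in $L^\infty(0,T;\mathcal L(Y))\subset L^1(0,T;\mathcal L(Y))$; and because $X(\cdot,t,0)$ depends continuously on $t$ and is, for each $t$, a diffeomorphism of $\R^n$ with Jacobian bounded uniformly in $t$, one has $|\omega(t')\triangle\omega(t)|\to0$ as $t'\to t$, whence $t\mapsto B(t)v$ is continuous in $L^2(\Omega)$ for every $v$; thus $B(\cdot)\in L^\infty(0,T;\mathcal L(U;Y))\subset L^2(0,T;\mathcal L(U;Y))$. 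So \eqref{heat-moving} is a genuine instance of \eqref{0.2}, well posed with $y\in C([0,T];L^2(\Omega))$, and the conclusion requested in Theorem \ref{theo-4} is precisely condition \eqref{0e1s} of Definition \ref{1d2} with the scalar kernel $\wt M(\cdot)$.

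Next I would identify the adjoint system. The Dirichlet Laplacian being self-adjoint, $A^*=\Delta$; the kernels being scalar, $M(s-t)^*=M(s-t)\,\mathrm{Id}$ and $\wt M(T-t)^*=\wt M(T-t)\,\mathrm{Id}$; hence the abstract adjoint equation \eqref{adj1-z0} specializes exactly to \eqref{heat-adj}. Moreover $B(t)$ is multiplication by the real characteristic function $\chi_{\omega(t)}$, hence self-adjoint, so $B(t)^*w=\chi_{\omega(t)}w$ and
$$
\int_0^T|B(s)^*w(s)|_U^2\,ds=\int_0^T\!\!\int_{\omega(t)}|w|^2\,dx\,dt .
$$
Consequently the observability inequality \eqref{2e2.3} required by Proposition \ref{prop-2} reads here as
$$
|w(0)|_{L^2(\Omega)}^2\le C\int_0^T\!\!\int_{\omega(t)}|w|^2\,dx\,dt,\qquad\forall\,w_T,z_T\in L^2(\Omega),
$$
which is nothing but estimate \eqref{o-x} established in Theorem \ref{teo1}.

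It then remains to invoke the ``if'' part of Proposition \ref{prop-2}: the validity of \eqref{2e2.3} yields that \eqref{heat-moving} is memory-type null controllable with kernel $\wt M(\cdot)$, i.e.\ for every $y_0\in L^2(\Omega)$ there exists $u\in L^2(0,T;L^2(\Omega))=L^2(Q)$ such that the corresponding solution $y$ to \eqref{heat-moving}, with control support $\omega(t)=X(\omega,t,0)$, satisfies $y(T)=\int_0^T\wt M(T-s)y(s)\,ds=0$ in $\Omega$, which is the assertion of Theorem \ref{theo-4}. I do not expect any real obstacle at this stage: the genuine difficulty — the Carleman-based observability \eqref{o-x} — has already been overcome in Theorem \ref{teo1}, and the only points that need a line of justification are the strong continuity of $t\mapsto B(t)$ and the harmless identification of scalar kernels times the identity inside the operator-valued setting of Section \ref{GC}. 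One could alternatively aim at a constructive control by minimizing the functional $J$ of Remark \ref{rek-2b}, whose coercivity would follow from \eqref{o-x}, but, as noted there, the correct completion of $Y\times Y$ in the memory setting still requires clarification, so the non-constructive deduction through Proposition \ref{prop-2} is the cleanest route.
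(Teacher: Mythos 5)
Your proposal is correct and follows exactly the paper's route: Theorem \ref{theo-4} is stated there as a direct consequence of Proposition \ref{prop-2} combined with the observability estimate \eqref{o-x} of Theorem \ref{teo1}, which is precisely the deduction you carry out. Your additional verifications (that \eqref{heat-moving} fits the abstract framework \eqref{0.2}, that the adjoint specializes to \eqref{heat-adj}, and that $B(t)^*w=\chi_{\omega(t)}w$) merely make explicit what the paper leaves implicit.
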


\begin{remark}
To prove Theorem \ref{theo-4}, we need Assumption \ref{tso1}, which seems not to be optimal. For instance, the fourth condition in this assumption, namely, the complement of the control region has two connected components for $t\in( t_1, t_2)$, is necessary for the construction of the basic weight function in Lemma \ref{weightpsi}, and consequently, for the obtainment of the null controllability through the use of Carleman inequalities with moving controls. However, as the result in \cite{MRR} indicates,  in the $1$-d case this condition is not necessary for the null controllability of the structurally damped wave equation with moving control. For a more general discussion on the optimality of Assumption \ref{tso1}, see \cite{CRZ}.
\end{remark}

\begin{remark}
The method developed in this section can also be used to treat the memory-type null controllability problem for the following parabolic equation with memory:
\begin{equation}\label{heat-moving-Laplace}
\left\{
\begin{array}{ll}
\displaystyle y_t-\Delta y + \int_0^t M(t-s)\Delta y(s)ds   = u\chi_{\omega(t)}(x) &  \mbox{in}  \    Q,  \\[2mm]
y = 0 & \mbox{on} \  \Sigma, \\[2mm]
y(0) = y_0 & \mbox{in} \ \Omega,
\end{array}
\right.
\end{equation}
where the kernels $M(\cdot)$ and $\wt M(\cdot)$ are of the form \eqref{MM}. Indeed, in this case, the corresponding adjoint system is given by
\begin{equation}\label{heat-adj-Laplace}
\left\{
\begin{array}{ll}
\displaystyle w_t=-\Delta w - \int_t^T M(s-t) \Delta w(s)ds  + \wt M(T-t)\Delta z_T &  \mbox{in}  \    Q,  \\[2mm]
w = 0 & \mbox{on} \  \Sigma, \\[2mm]
w(T) = w_T & \mbox{in} \ \Omega,
\end{array}
\right.
\end{equation}
where $w_T  \in L^2(\Omega)$ and $z_T \in H^2(\Omega)\cap H^1_0(\Omega)$. Under the assumptions of Theorem \ref{teo1}, we can show that the observability  inequality \eqref{o-x} still holds for the equation \eqref{heat-adj-Laplace}.

Let us show this in the simple case where $M(\cdot) \equiv \wt M(\cdot) \equiv 1$. First, we set
$$
z(t) =  - \int_t^T \Delta w(s)ds  + \Delta z_T,
$$
which leads to the system
\begin{equation}\label{adjoint-Laplace}
\left\{
\begin{array}{ll}
w_{t}+\Delta w = z &  \mbox{in}  \    Q,  \\[2mm]
z_t = \Delta w &   \mbox{in}  \    Q,  \\[2mm]
w = 0 & \mbox{on} \  \Sigma.
\end{array}
\right.
\end{equation}
From \eqref{adjoint-Laplace}, we see that $w$ satisfies
$$
w_{tt} + \Delta w_t - \Delta w=0.
$$
Setting $\theta = w_t -w$, we obtain the system
\begin{equation}\label{adjoint-Laplace-1}
\left\{
\begin{array}{ll}
\theta_{t}+\Delta \theta +\theta  + w =0  &  \mbox{in}  \    Q,  \\[2mm]
w_t -w = \theta &   \mbox{in}  \    Q,  \\[2mm]
\theta = 0 & \mbox{on} \  \Sigma.
\end{array}
\right.
\end{equation}
Arguing as before, we can easily prove a Carleman inequality for the system \eqref{adjoint-Laplace-1}, with an observation in $w$. From such a Carleman inequality, we immediately obtain the observability  inequality \eqref{o-x}.
\end{remark}

\section{Further comments.}\label{s6-1}
The techniques developed in this paper open up the possibility of addressing many other related issues. We mention here some of them that could be of interest for future research:
\begin{itemize}

\item In this work, we have addressed the problem of memory-type controllability  for  finite-dimensional ordinal differential and  parabolic equations. But, even in these cases, we have limited our attention to some very special situations. A systematic analysis of these issues in a broader context is still to be done. In particular, our results for parabolic equations concern mainly the memory kernels of polynomial type. Indeed, from the proof of Theorem \ref{teo1}, it is easy to see that the special form of both $\wt M(\cdot)$ and $M(\cdot)$  in \eqref{MM} plays a crucial role. It would be of interest to extend these results to the more general cases, in the context  of analytic memory kernels.

\item Similar problems could be considered for hyperbolic like equations, for instance, for the wave equation with memory terms. This is an open problem for the general case (See \cite{LZZ1} for some results in the respect).

\item
This work addresses only the memory-type null controllability  problem for linear equations. The same problem would be of interest for nonlinear equations but the methods of proof used in this paper, that allow dealing with special memory kernels, and that require to compute successive time derivates of the system under consideration, do not apply in the nonlinear context.

For instance, it would be quite interesting to consider the memory-type null controllability of the following nonlinear version of \eqref{heat-moving}:
 \begin{equation}\label{non-heat-moving}
\left\{
\begin{array}{ll}
\displaystyle y_t-\Delta y +f(y)+ \int_0^t M(t-s) y(s)ds   = u\chi_{\omega(t)}(x) &  \mbox{in}  \    Q,  \\[2mm]
y = 0 & \mbox{on} \  \Sigma, \\[2mm]
y(0) = y_0 & \mbox{in} \ \Omega.
\end{array}
\right.
\end{equation}
Most often, the controllability of semilinear systems is achieved by a fixed point method, out of the controllability of the linearized one, replacing the nonlinear term with a linear one, involving a $(t,x)$-dependent potential.
The approach developed to derive the observability estimate for \eqref{heat-adj} does not apply to this case and, consequently, the memory-type null controllability problem for \eqref{non-heat-moving} is completely open.

\item
It is also quite interesting to address the  memory-type null controllability for other PDE models. On the other hand, it is also interesting to consider the controllability problems for PDEs involving other types of  nonlocal terms. This issue seems to be widely open. We refer to \cite{UB} for the analysis of nonlocal fractional Schr\"odinger and wave equations from the controllability viewpoint.

\item
One can also introduce the concept of memory-type null controllability for stochastic evolutions. However, generally speaking, one needs to consider memory terms both in drift and diffusion components. Because of this, besides \eqref{0e1s}, one also requires that
 $$
 \int_0^T\h M(T-s)y(s)dW(s)=0,
 $$
where $\h M(\cdot)$ is another memory kernel, and $\{W(t)\}_{t\in[ 0,T]}$ is a standard Brownian motion. Clearly, in general, the corresponding problem is quite challenging even for the  stochastic evolutions in finite dimensions.

\item The memory-type controllability property considered along the paper is a particular instance of more general controllability concepts. For instance,  let $Z$ be a metric space, and $\Gamma$ be a nonempty subset of $Z$. Suppose $F:\ C([0,T]; Y)\times L^2(0,T;U)\to Z$ is a given map. Motivated by \cite[p. 14]{YL}, the equation \eqref{0.2} is said to be $(F,\Gamma)$-controllable if for any $y_0\in Y$, there is a control $u(\cdot)\in L^2(0,T;U)$ such that the corresponding solution $y(\cdot)$ satisfies
 \begin{equation}\label{0e1}
 F(y(\cdot), u(\cdot))\in \Gamma.
 \end{equation}
Obviously, the memory-type null controllability property of \eqref{0.2} is a special case of $(F,\Gamma)$-controllability of the same system with
 \begin{equation}\label{0zxn2}
   Z=Y\times Y, \quad \Gamma=\{0\}\times \{0\}
 \end{equation}
 and
 $$
 F(y(\cdot), u(\cdot))=\left(\begin{array}{cc}\ds y(T)\\[2mm]\ds \int_0^T\wt M(T-s)y(s)ds\end{array}
 \right).
 $$
Nevertheless, so far the $(F,\Gamma)$-controllability concept is too general to obtain meaningful results.

\end{itemize}

\bigskip
\noindent{\bf Acknowledgements:} F. W. Chaves-Silva was partially supported by the ERC project Semi-Classical Analysis of  Partial Differential Equations, ERC-2012-ADG, project number: 320845. X. Zhang was supported by the NSF of China under grant 11231007 and the Chang Jiang Scholars Program from the Chinese
Education Ministry. This work was partially supported by the Advanced Grant DYCON (Dynamic Control) of the European Research Council Executive Agency, ICON of the French ANR (ANR-2016-ACHN-0014-01), FA9550-15-1-0027 of AFOSR, A9550-14-1-0214 of the EOARD-AFOSR, and the MTM2014-52347 Grant of the MINECO (Spain).

\end{document}